\documentclass[11pt,a4paper]{amsart}
\usepackage{graphics,epic}
\usepackage{amsmath,amssymb, amsthm}
\usepackage[all,2cell]{xy}
\usepackage{times}
\usepackage{mathrsfs}
\usepackage[top=25mm,bottom=25mm,left=30mm,right=30mm]{geometry}

\newtheorem{theorem}{Theorem}[section]
\newtheorem*{theorem*}{Theorem}

\newtheorem{lemma}[theorem]{Lemma}
\newtheorem{proposition}[theorem]{Proposition}

\newtheorem*{conjecture*}{Conjecture}

\newcommand{\ie}{{\em i.e.}\ }

\newcommand{\opname}[1]{\operatorname{\mathsf{#1}}}

\renewcommand{\mod}{\opname{mod}\nolimits}

\newcommand{\add}{\opname{add}\nolimits}

\newcommand{\der}{\cd}

\newcommand{\Z}{\mathbb{Z}}

\renewcommand{\P}{\mathbb{P}}

%
%
\newcommand{\Hom}{\opname{Hom}}
\newcommand{\go}{\opname{G_0}}

\newcommand{\Ext}{\opname{Ext}}

\newcommand{\End}{\opname{End}}

%
%

\newcommand{\cd}{{\mathcal D}}

\renewcommand{\hat}[1]{\widehat{#1}}

\setcounter{page}{1}

\begin{document}

\title[Reduction approach to silting theory over hereditary categories]{A reduction approach to silting objects for derived categories of hereditary categories}\thanks{Partially supported by the National Natural Science Foundation of China (Grant No. 11971326)}

\author[Dai]{Wei Dai}
\address{Wei Dai\\ Department of Mathematics\\SiChuan University\\610064 Chengdu\\P.R.China}
\email{375670160@qq.com}

\author[Fu]{Changjian Fu}
\address{Changjian Fu\\Department of Mathematics\\SiChuan University\\610064 Chengdu\\P.R.China}
\email{changjianfu@scu.edu.cn}
\subjclass[2010]{16G10, 16E10, 18E30}
\keywords{Hereditary category, Tilting object, Silting object, Simple-minded collection}

\begin{abstract}
Let $\mathcal{H}$ be a hereditary abelian category over a field $k$ with finite dimensional $\Hom$ and $\Ext$ spaces. It is proved that the bounded derived category $\der^b(\mathcal{H})$ has a silting object   iff $\mathcal{H}$ has a tilting object iff $\der^b(\mathcal{H})$ has a simple-minded collection with acyclic $\Ext$-quiver. Along the way, we obtain a new proof for the fact that every presilting object of $\der^b(\mathcal{H})$ is a partial silting object. We also consider the question of complements for pre-simple-minded collections. In contrast to presilting objects, a pre-simple-minded collection $\mathcal{R}$ of $\der^b(\mathcal{H})$ can be completed into a simple-minded  collection iff the $\Ext$-quiver of $\mathcal{R}$ is acyclic. 
\end{abstract}

\maketitle
\section{Introduction}
Throughout this note, let $k$ be a field. By a hereditary abelian category, we mean a hereditary abelian category over $k$ with finite dimensional $\Hom$ and $\Ext$ spaces.

Hereditary abelian categories with tilting objects and their bounded  derived categories provide a framework for the classical tilting theory, which were extensively studied since early eighties.The main examples of such categories are the category $\mod H$ of finitely generated right modules over a finite dimensional hereditary $k$-algebra $H$ and the category $\opname{coh}\mathbb{X}$ of coherent sheaves over an exceptional curve $\mathbb{X}$ in the sense of Lenzing~\cite{Lenzing}. A remarkable theorem of Happel and Retiten \cite{HR} shows that a connected hereditary abelian category with tilting object is either derived equivalent to $\mod H$ or to $\opname{coh}\mathbb{X}$.

Silting objects were first introduced in \cite{KV} as a generalization of tilting objects to parametrize bounded $t$-structures on derived categories of path algebras of Dynkin quivers.  Recent years, the topic has obtained a lot of attention due to the work of  Aihara and Iyama \cite{AiharaIyama}, in which a mutation theory for silting objects has been developed.  Moreover, a reduction theorem has been proved, which establishes a correspondence between certain silting objects in a triangulated category $\mathcal{T}$ and silting objects in its Verdier quotient $\mathcal{T}/\mathcal{S}$ with respect to a thick subcategory of $\mathcal{S}$.
Various connections between silting objects and other topics in representation theory have been discovered, such as bounded $t$-structures, co-$t$-structures, torsion pairs and simple-minded collections and so on (cf. \cite{KY, BY} for instance). 

The aim of this note is to study the bounded derived category 
 $\der^b(\mathcal{H})$ of a hereditary abelian category $\mathcal{H}$ from the viewpoint of silting theory.  
It is known that there are triangulated categories which do not admit a silting object. Our first result is a characterization of the existence of silting objects of $\der^b(\mathcal{H})$. 
\begin{theorem}\label{t:main-thm-1}
Let $\mathcal{H}$ be a hereditary abelian category. The following are equivalent:
\begin{enumerate}
\item $\mathcal{H}$ has a tilting object;
\item $\der^b(\mathcal{H})$ has a tilting object;
\item $\der^b(\mathcal{H})$ has a silting object;
\item $\der^b(\mathcal{H})$ has a simple-minded collection whose $\opname{Ext}$-quiver is acyclic.
\end{enumerate}
\end{theorem}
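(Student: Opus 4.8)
The plan is to prove the cycle of implications $(1)\Rightarrow(2)\Rightarrow(3)\Rightarrow(4)\Rightarrow(1)$, with the bulk of the work concentrated in $(3)\Rightarrow(4)$ and $(4)\Rightarrow(1)$. The implication $(1)\Rightarrow(2)$ is immediate: a tilting object $T\in\mathcal H$ remains a tilting object, hence in particular a silting object, in $\der^b(\mathcal H)$ because $\Hom_{\der^b(\mathcal H)}(T,T[i])=\Ext^i_{\mathcal H}(T,T)$ vanishes for $i\neq 0$ by hereditariness and $T$ generates. The implication $(2)\Rightarrow(3)$ is the trivial observation that every tilting object is a silting object. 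So the content is in closing the loop from silting objects back to a tilting object in $\mathcal H$ itself.

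For $(3)\Rightarrow(4)$ I would invoke the general correspondence between silting objects and simple-minded collections for a triangulated category with a suitable bounded $t$-structure (as in \cite{KY, BY}): a silting object $M$ of $\der^b(\mathcal H)$ gives rise to a bounded $t$-structure whose heart is a length category, and the simple objects of that heart form a simple-minded collection $\mathcal R$ whose $\Ext^1$-quiver is the Gabriel quiver of the heart, which is exactly the quiver of the endomorphism algebra $\End(M)$ read off from $M$; the latter is acyclic precisely because $M$ is silting (no negative self-extensions forces the quiver of the associated algebra to have no oriented cycles through a standard argument on the $t$-structure). The main point to verify carefully here is that the $\Ext$-quiver of $\mathcal R$ coincides with the quiver controlled by $M$, and that silting-ness of $M$ translates into acyclicity; this is where I expect to lean most heavily on results recalled earlier in the paper.

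The real obstacle is $(4)\Rightarrow(1)$: from a simple-minded collection $\mathcal R=\{R_1,\dots,R_n\}$ of $\der^b(\mathcal H)$ with acyclic $\Ext$-quiver, produce a tilting object in $\mathcal H$. The strategy is to exploit hereditariness heavily. First, using that $\mathcal H$ is hereditary, each $R_i$ is isomorphic to a shift $X_i[d_i]$ of an object $X_i\in\mathcal H$; acyclicity of the $\Ext$-quiver of $\mathcal R$ should allow one, after reindexing along a topological order, to show the shifts $d_i$ can be normalized—one uses the vanishing $\Hom(R_i,R_j[m])=0$ for $m<0$ together with the hereditary constraint $\Hom_{\der^b}(X,Y[m])=0$ for $m\notin\{0,1\}$ to pin down the possible $d_i$, and then a mutation/tilting argument along the acyclic quiver to move all $R_i$ into a single copy of $\mathcal H$. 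Once $\mathcal R$ lies entirely in $\mathcal H$ as a simple-minded collection, its members are the simple objects of a length abelian subcategory equivalent to $\mod A$ for $A$ a finite dimensional algebra, and one then builds the desired tilting object of $\mathcal H$ from the associated projective generators, invoking the Happel–Reiten–Ringel type recognition that a hereditary heart with a simple-minded collection whose $\Ext$-quiver is acyclic must be $\mod H$ for hereditary $H$. The delicate part—and where I would spend most of the effort—is the reindexing-and-mutation step that flattens the shifts $d_i$ to zero: one must check that each elementary mutation of the simple-minded collection stays within the class of simple-minded collections with acyclic $\Ext$-quiver, and that the process terminates, which is exactly the phenomenon highlighted in the abstract (completion of a pre-simple-minded collection is possible \emph{iff} the $\Ext$-quiver is acyclic), so the acyclicity hypothesis in $(4)$ is used in an essential, non-removable way.

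Finally, $(4)\Rightarrow(1)$ having been established, the circle closes, and along the way the proof also yields the promised statement that every presilting object of $\der^b(\mathcal H)$ is partial silting: given a presilting $N$, one completes the associated pre-simple-minded collection (which automatically has acyclic $\Ext$-quiver, since presilting forbids the cycles) to a full simple-minded collection, hence to a silting object containing $N$ as a summand.
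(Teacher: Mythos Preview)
Your cycle $(1)\Rightarrow(2)\Rightarrow(3)\Rightarrow(4)\Rightarrow(1)$ diverges from the paper in both organization and substance, and two of your steps have genuine gaps.

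First, the paper does not pass through $(3)\Rightarrow(4)$. It proves $(3)\Rightarrow(1)$ and $(4)\Rightarrow(1)$ directly, each by induction on $\opname{rank}\go(\der^b(\mathcal H))$, using the same reduction mechanism: pick an exceptional indecomposable summand $E$ (of the silting object) or member $X_1$ (of the SMC), observe that $\opname{thick}(E)$ is functorially finite, identify the Verdier quotient $\der^b(\mathcal H)/\opname{thick}(E)$ with $\der^b(E^\perp)$ (Lemma~\ref{l:prep-cat}), apply silting reduction (Theorem~\ref{t:silting-reduction}) or SMC reduction (Theorem~\ref{t:smc-reduction}) to push the remaining summands down to a silting object respectively SMC of $\der^b(E^\perp)$, invoke the inductive hypothesis to get a tilting object in $E^\perp$, and then lift back to a tilting object of $\mathcal H$ via Lemma~\ref{l:silting-tilting}. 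The implication $(1)\Rightarrow(4)$ is handled separately by taking simples over $\End_{\mathcal H}(T)$. You never touch this perpendicular-category reduction, which is the engine of the whole argument.

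Your $(3)\Rightarrow(4)$ has a circularity: the Koenig--Yang bijection you invoke is stated for $\der^b(\mod\Lambda)$ with $\Lambda$ of finite global dimension, so to apply it you already need $\der^b(\mathcal H)\simeq\der^b(\mod\Lambda)$, i.e.\ essentially $(1)$ or $(2)$. Also, your justification for acyclicity is wrong: it is not silting-ness that forces $\End(M)$ to have acyclic quiver (endomorphism algebras of silting objects can have oriented cycles in general), but rather hereditariness of $\mathcal H$ via the Happel--Ringel no-cycle lemma (Lemma~\ref{l:no-cycle}).

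Your $(4)\Rightarrow(1)$ is a genuinely different idea, but the sketch does not close. You propose to mutate the SMC until all members lie in a single shift of $\mathcal H$, but you give no argument that SMC mutation preserves acyclicity of the $\Ext$-quiver, nor that the process terminates; and even granting that the SMC ends up inside $\mathcal H$, the step ``build the desired tilting object from the associated projective generators'' is unsupported---an SMC sitting in $\mathcal H$ is not a priori the set of simples of a module category sitting inside $\mathcal H$ with enough projectives. The paper avoids all of this by never trying to flatten shifts: it peels off one exceptional $X_1$, checks (via the explicit description of the shift $\langle 1\rangle$ in $\mathcal Z$) that the residual SMC in $\der^b(X_1^\perp)$ still has acyclic $\Ext$-quiver, and inducts.
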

 The equivalence between $(1)$ and $(2)$ was proved in \cite[Theorem 1.7]{HR98} and our result yields a new proof for this fact.

 Let $\mathcal{T}$ be a Krull-Schmidt triangulated category with silting objects.
 One of open questions in silting theory is whether a presilting object in $\mathcal{T}$ can be completed into a silting object (cf. \cite[Question 3.13]{BY} and \cite[Question 2.14]{Aihara})?  The following result gives a positive answer for the bounded derived category of a hereditary abelian category with tilting objects.
 \begin{theorem}~\label{t:presilting-partial}
Let $\mathcal{H}$ be a hereditary abelian category with tilting objects. Every presilting object of $\der^b(\mathcal{H})$ can be completed into a silting object.
\end{theorem}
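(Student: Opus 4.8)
The plan is to reduce the general case to a concrete model and then invoke silting reduction in the spirit of Aihara–Iyama. By Theorem~\ref{t:main-thm-1} (applied to the hypothesis that $\mathcal{H}$ has a tilting object), $\der^b(\mathcal{H})$ has a tilting object $T$, and by the Happel–Reiten dichotomy (combined with $\der^b(\mathcal{H})\simeq\der^b(\End T)$) we may assume $\mathcal{H}$ is either $\mod H$ for a finite dimensional hereditary algebra $H$ or $\coh\mathbb{X}$ for an exceptional curve $\mathbb{X}$. So let $P$ be a given presilting object of $\der^b(\mathcal{H})$; we want to find $P'$ with $P\oplus P'$ silting. Since $\der^b(\mathcal{H})$ is hereditary, after shifting summands we may assume $P\in\mathcal{H}\vee\mathcal{H}[1]$, i.e.\ $P=P_0\oplus P_1[1]$ with $P_0,P_1\in\mathcal{H}$; the presilting condition then says in particular that $P_0$ and $P_1$ are (partial) rigid objects of $\mathcal{H}$ and that $\Hom_{\mathcal{H}}(P_1,P_0)=0$ together with the vanishing of the remaining shifted $\Hom$-groups.

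The key step is to show any presilting $P$ is a direct summand of a \emph{silting} object, and I would do this by completing $P_0$ and $P_1$ separately inside $\mathcal{H}$ using the theory of exceptional sequences. First complete $P_1$ to a maximal rigid (equivalently, by the hereditary structure, tilting-on-its-support) object; the point is that in both $\mod H$ and $\coh\mathbb{X}$ a basic rigid object has at most $n=\operatorname{rank}K_0$ indecomposable summands and can always be enlarged to one with exactly $n$ summands — this is the classical complement theorem for exceptional sequences / partial tilting modules of Bongartz, extended to $\coh\mathbb{X}$. Then, working in the perpendicular category or using silting mutation, adjust $P_0$: the vanishing conditions $\Hom(P_1[1],P_0[i])=0$ for $i\le 0$ constrain $P_0$ to lie in a suitable (co)aisle, and inside that subcategory — which is again (derived equivalent to) the module category of a smaller hereditary algebra — one completes $P_0$ by the same Bongartz-type argument. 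Assembling the two completions and checking all cross $\Hom$-vanishing gives a presilting object with $n$ indecomposable summands, hence a silting object by the standard fact that in a hereditary derived category with a silting object a presilting object with maximal rank is silting.

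An alternative, perhaps cleaner route, is to use Aihara–Iyama silting reduction directly: given presilting $P$, the subcategory $\mathcal{S}=\operatorname{thick}(P)$ is generated by a presilting object, so $\operatorname{thick}(P)$ admits $P$ itself as a silting object; silting reduction then produces a triangle equivalence between the relevant subquotient of $\der^b(\mathcal{H})$ and the Verdier quotient $\der^b(\mathcal{H})/\operatorname{thick}(P)$, and one shows this quotient is again the bounded derived category of a hereditary category with a tilting object (using that quotients of $\der^b(\mathcal{H})$ by thick subcategories generated by exceptional objects are of this form). Pulling back any silting object of the quotient along the reduction and adding $P$ yields the desired completion. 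I would likely present the exceptional-sequence argument as the main line and remark on the reduction argument.

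The main obstacle I expect is the $\coh\mathbb{X}$ case and, within it, controlling the \emph{tubular}/torsion part: complements for partial tilting bundles are classical, but a presilting object of $\der^b(\coh\mathbb{X})$ can have summands supported at the exceptional points (finite-length sheaves in the tubes), and one must check that such a rigid configuration — necessarily lying in a proper subtube — can be completed, and that after completing it the remaining "perpendicular" category is still of the required hereditary-with-tilting type. Handling this uniformly with the $\mod H$ case, rather than by a case analysis, is the delicate point; the hereditary hypothesis is what makes it tractable, since it forces every presilting object into $\mathcal{H}\vee\mathcal{H}[1]$ and reduces all Ext-vanishing to finitely many conditions in $\mathcal{H}$.
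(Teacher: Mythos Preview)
Your main approach has a genuine gap: the reduction ``after shifting summands we may assume $P\in\mathcal{H}\vee\mathcal{H}[1]$'' is not valid. A presilting object can have indecomposable summands spread over arbitrarily many cohomological degrees; shifting summands independently destroys the presilting condition, while a global shift changes nothing. For instance, with $\mathcal{H}=\mod kA_3$ (linear orientation $1\to 2\to 3$) the object $S_1\oplus S_3[5]$ is basic presilting with two summands, hence not silting, and no global shift places it in two consecutive degrees. The decomposition $P=P_0\oplus P_1[1]$ and the ensuing two-step Bongartz argument therefore never get started, and the worry you flag about tubes in $\coh\mathbb{X}$ is not the real obstruction.

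Your alternative via silting reduction is the paper's idea, but the paper organises it as an induction on $\opname{rank}\go(\der^b(\mathcal{H}))$ and quotients by a \emph{single} exceptional summand at each step --- this is what makes it go through, since Theorem~\ref{t:verdier-quotient} (the quotient by one exceptional is again $\der^b$ of a hereditary category with tilting object, of rank one less) is only available in that form; showing directly that $\der^b(\mathcal{H})/\opname{thick}(P)$ admits a silting object for a multi-term $P$ would amount to redoing this induction. Concretely: using Lemma~\ref{l:no-cycle} one orders the summands $T_1,\dots,T_r$ with $T_i\in\mathcal{H}[t_i]$, $t_1\le\cdots\le t_r=0$ and $\Hom(T_i,T_j)=0$ for $i>j$; then $\overline{T}:=T_1\oplus\cdots\oplus T_{r-1}$ lies in $\opname{thick}(T_r)^\perp$, so it is presilting in $\der^b(\mathcal{H})/\opname{thick}(T_r)\cong\der^b(\mathcal{H}')$ and the inductive hypothesis applies there. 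One last subtlety your sketch omits: after lifting the completed silting object of the quotient back via Theorem~\ref{t:silting-reduction}, it contains $T_r$ by construction, but one must still verify it contains $\overline{T}$; the paper does this by observing $\Hom(\overline{T},\mathcal{S}_{T_r}^{<0})=0$ (a consequence of $T$ being presilting), so the approximation triangle defining the lift splits off $\overline{T}$.
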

We remark that Theorem \ref{t:presilting-partial} is not new. In particular, Br\"{u}stle and Yang \cite{BY} have suggested a proof by  the transitivity of the action of braided group on exceptional sequences. In \cite{LL,XuYang}, the result has been proved for $\mathcal{H}=\mod H$ for a finite dimensional hereditary $k$-algebra $H$ by  different methods.

Simple-minded collection is a dual notion of silting object.  We consider the analogous question of complements for a pre-simple-minded collection in $\der^b(\mathcal{H})$. In contrast to presilting objects, there are pre-simple-minded collections which can not be completed into simple-minded collections.
\begin{theorem}\label{t:main-thm-3}
Let $\mathcal{H}$ be a hereditary abelian category with tilting objects. A pre-simple-minded collection $\mathcal{X}$ of $\der^b(\mathcal{H})$ can be completed into a simple-minded collection if and only if the $\opname{Ext}$-quiver  of $\mathcal{X}$ is acyclic.
\end{theorem}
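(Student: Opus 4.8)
The plan is to prove the two implications separately. The forward implication is short; the converse is where the reduction machinery does the work.

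\emph{Necessity.} Suppose $\mathcal X$ is contained in a simple-minded collection $\mathcal S$ of $\der^b(\mathcal H)$. Since $\Ext^1_{\der^b(\mathcal H)}(S,S')$ between two objects of $\mathcal X$ is unchanged on passing to $\mathcal S$, the $\Ext$-quiver of $\mathcal X$ is the full subquiver of that of $\mathcal S$ on the vertices of $\mathcal X$, and a full subquiver of an acyclic quiver is acyclic; so it suffices to show that every simple-minded collection of $\der^b(\mathcal H)$ has acyclic $\Ext$-quiver. Here the tilting hypothesis enters: such a collection is the set of simple objects of a length heart $\mathcal A$ of $\der^b(\mathcal H)$, and using $\der^b(\mathcal A)\simeq\der^b(\mathcal H)$ one identifies $\mathcal A\simeq\mod\Lambda$ for a piecewise hereditary algebra $\Lambda$; the $\Ext$-quiver of $\mathcal S$ is then the ordinary quiver of $\Lambda$, which is acyclic. (The identification of the heart with a module category and the acyclicity of the quiver of a piecewise hereditary algebra should be in place from the development around Theorem~\ref{t:main-thm-1}.)

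\emph{Sufficiency.} Assume the $\Ext$-quiver of $\mathcal X=\{S_1,\dots,S_r\}$ is acyclic. If $r=\rank K_0(\der^b(\mathcal H))$ then $\mathcal X$ is already simple-minded; so assume $r$ is strictly smaller. I would pass to the simple-minded reduction $\mathcal U$ of $\der^b(\mathcal H)$ along $\mathcal X$ — a triangulated subquotient of $\der^b(\mathcal H)$ formed from the objects orthogonal to $\mathcal X$ in negative degrees modulo $\opname{thick}(\mathcal X)$ — for which there is a bijection between simple-minded collections of $\der^b(\mathcal H)$ containing $\mathcal X$ and simple-minded collections of $\mathcal U$. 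It therefore suffices to produce one simple-minded collection of $\mathcal U$ and transport it back. The acyclicity of the $\Ext$-quiver is exactly the condition under which $\mathcal X$ corresponds, via the Koenig--Yang bijection, to a presilting object $P$ of $\der^b(\mathcal H)$ with $\opname{thick}(P)=\opname{thick}(\mathcal X)$, so that $\mathcal U$ is identified with the silting reduction $\der^b(\mathcal H)/\opname{thick}(P)$. By Theorem~\ref{t:presilting-partial}, $P$ extends to a silting object $T=P\oplus Q$; the image $\overline Q$ of $Q$ is a silting object of $\mathcal U$, so $\mathcal U$ has a simple-minded collection, whose preimage completes $\mathcal X$. (Alternatively: show directly that $\mathcal U\simeq\der^b(\mathcal H')$ for a hereditary $\mathcal H'$ that again has a tilting object — acyclicity being what keeps the relevant perpendicular category hereditary — and apply Theorem~\ref{t:main-thm-1} to $\mathcal H'$.)

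\emph{Main obstacle.} The crux is controlling the reduction: showing that acyclicity really forces the identification above. This fails for a general pre-simple-minded collection — the two simple objects of a rank-two tube inside $\der^b(\mod k\widetilde{D}_4)$ form a pre-simple-minded collection with $\Ext$-quiver a $2$-cycle which, by the necessity half, cannot be completed. I expect the work to split into: (i) using acyclicity to order $\mathcal X$ so that $\Ext^1(S_i,S_j)=0$ for $i\ge j$, then peeling off the $S_i$ one at a time and recognizing each intermediate orthogonal category as the bounded derived category of a hereditary abelian category with a tilting object (equivalently, as a silting reduction of $\der^b(\mathcal H)$); and (ii) verifying that the simple-minded-reduction bijection is compatible with $\Ext$-quivers, so that what is produced is a genuine completion of $\mathcal X$. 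The necessity direction relies on the parallel statement that length hearts of $\der^b(\mathcal H)$ have acyclic $\Ext$-quiver of simples, which is the same phenomenon seen from the other side.
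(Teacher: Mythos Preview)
Your necessity argument is essentially the paper's: every SMC of $\der^b(\mathcal H)$ arises via the Koenig--Yang correspondence from a silting object, hence its $\Ext$-quiver is acyclic by Lemma~\ref{l:no-cycle}, and a full subquiver of an acyclic quiver is acyclic.

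For sufficiency you sketch two routes. The second one --- your ``alternative'' and the plan in the obstacle paragraph: order $\mathcal X$ so that $\Ext^1(X_i,X_j)=0$ for $i\ge j$, peel off one $X_i$ at a time, and recognise each reduction as $\der^b(\mathcal H')$ for a smaller hereditary $\mathcal H'$ with tilting --- is exactly the paper's proof. The paper inducts on $\opname{rank}\go(\der^b(\mathcal H))$, applies Jin's SMC reduction (Theorem~\ref{t:smc-reduction}) to the single exceptional object $X_1$, checks that $\{X_2,\dots,X_r\}$ remains a pre-SMC with acyclic $\Ext$-quiver in $\mathcal Z\cong\der^b(\mathcal H)/\opname{thick}(X_1)$, and invokes Theorem~\ref{t:verdier-quotient} to identify the quotient with $\der^b(\mathcal H')$.

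Your first route, however, has a genuine gap. You assert that ``acyclicity of the $\Ext$-quiver is exactly the condition under which $\mathcal X$ corresponds, via the Koenig--Yang bijection, to a presilting object $P$ with $\opname{thick}(P)=\opname{thick}(\mathcal X)$''. But Koenig--Yang is a bijection between \emph{silting} objects and \emph{SMCs}; there is no ready-made version matching presilting objects with pre-SMCs. What you really need is that $\opname{thick}(\mathcal X)$ admits a silting object. Now $\mathcal X$ is an SMC of $\opname{thick}(\mathcal X)$ with acyclic $\Ext$-quiver, so this would follow from the implication $(4)\Rightarrow(3)$ of Theorem~\ref{t:main-thm-1} \emph{if} you knew $\opname{thick}(\mathcal X)\simeq\der^b(\mathcal H'')$ for some hereditary $\mathcal H''$ --- but that is not given a priori, and establishing it amounts to the same one-at-a-time induction. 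Similarly, applying Jin's reduction to all of $\mathcal X$ at once requires verifying the functorial-finiteness hypotheses on the extension closure $\mathcal H_{\mathcal X}$, which is not immediate; the paper avoids this entirely because for a single exceptional $X_1$ one has $\mathcal H_{\{X_1\}}=\add X_1$ and the hypotheses are trivial. In short, your global shortcut via $P$ either needs independent justification or collapses back into the inductive peeling you describe afterwards --- which is precisely the paper's argument.
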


Our proofs of Theorem \ref{t:main-thm-1}--\ref{t:main-thm-3} are inspired by the reduction  approach of \cite{FG}, where the Iyama-Yoshino's reduction was applied to study the connectedness of cluster-tilting graph of a hereditary abelian category. In present paper, we apply silting reduction to investigate silting objects in the bounded derived category $\der^b(\mathcal{H})$ of a hereditary abelian category $\mathcal{H}$. A key observation is that the localization of $\der^b(\mathcal{H})$ with respect to the thick subcategory generated by an exceptional object is triangle equivalent to the bounded derived category of another hereditary abelian category (cf. Lemma \ref{l:prep-cat} and Theorem \ref{t:verdier-quotient}).

The paper is organized as follows. In Section \ref{s:preliminary}, we  recall basic results for silting theory ans simple-minded collections. In Section \ref{s:hereditary-cat}, we investigate the  localization of the bounded derived category of a hereditary abelian category  with respect to an exceptional object. In particular, Theorem \ref{t:verdier-quotient} is proved. We present the proofs of Theorem \ref{t:main-thm-1}, Theorem \ref{t:presilting-partial} and Theorem \ref{t:main-thm-3} in Section \ref{s:proofs}.

\subsection*{Notation}
Let $\mathcal{T}$ be a triangulated category and $\mathcal{X}, \mathcal{Y}$ two full subcategories of $\mathcal{T}$.
\begin{itemize}
\item We always denote by $[1]$ the suspension functor of $\mathcal{T}$ unless otherwise stated.
\item Denote by $\mathcal{X}*\mathcal{Y}$ the subcategory of $\mathcal{T}$ consisting of objects $Z$ which admits a triangle $X\to Z\to Y\to X[1]$, where $X\in \mathcal{X}, Y\in \mathcal{Y}$.
\item For an integer $l$, set $\mathcal{X}[l]:=\{X[l]~|~\forall\ X\in \mathcal{X}\}$.
\item Let $\add \mathcal{X}$ be the smallest full subcategory of $\mathcal{T}$ which is closed under finite coproducts, summands, isomorphisms and containing $\mathcal{X}$. If $\mathcal{X}$ consists of a single object $X$, we simply denote it by $\add X$.
\item Denote by $\opname{thick}(\mathcal{X})$ the thick subcategory of $\mathcal{T}$ containing $\mathcal{X}$.
\item If $\mathcal{T}$ is Krull-Schmidt and $M\in \mathcal{T}$, denote by $|M|$ the number of pairwise non-isomorphic indecomposable direct summands of $M$.
\end{itemize}

\section{Preliminaries}  \label{s:preliminary}

\subsection{Perpendicular category and Verdier quotient} Let $\mathcal{T}$ be a Krull-Schmidt triangulated category and $\mathcal{M}$ a subcategory of $\mathcal{T}$.   A morphism $f: M\to N$ is a {\it right $\mathcal{M}$-approximation} of $N\in \mathcal{T}$ if $M\in \mathcal{M}$ and $\Hom_\mathcal{T}(M', f)$ is surjective for any $M'\in \mathcal{M}$.
The subcategory $\mathcal{M}\subset \mathcal{T}$ is {\it contravariantly finite} if every object in $\mathcal{T}$ has a right $\mathcal{M}$-approximation. Dually, we define a {\it left $\mathcal{M}$-approximation} and {\it covariantly finite subcategory}. We say that $\mathcal{M}$ is {\it functorially finite} if it is contravariantly finite and covariantly finite.

Define
\[\mathcal{M}^\perp:=\{N\in \mathcal{T}~|~\Hom_\mathcal{T}(M,N)=0~\text{for all $M\in \mathcal{M}$}\}
\]
and 
\[ \!^\perp\mathcal{M}:=\{N\in \mathcal{T}~|~\Hom_\mathcal{T}(N,M)=0~\text{for all $M\in \mathcal{M}$}\}.
\]
The subcategory $\mathcal{M}^\perp$ (resp. $ \!^\perp\mathcal{M}$) is called the {\it right} (resp. {\it left) perpendicular} category of $\mathcal{M}$ in $\mathcal{T}$. If $\mathcal{M}$ is a triangulated subcategory of $\mathcal{T}$, then both $\mathcal{M}^\perp$ and $ \!^\perp\mathcal{M}$ are triangulated subcategories of $\mathcal{T}$.

Recall that a pair of subcategories $(\mathcal{X}, \mathcal{Y})$ of $\mathcal{T}$ is a {\it torsion pair} of $\mathcal{T}$, if  $\Hom_{\mathcal{T}}(\mathcal{X}, \mathcal{Y})=0$ and $\mathcal{X}*\mathcal{Y}=\mathcal{T}$. The following useful result is known as Wakamatsu's Lemma (cf. \cite[Lemma 2.22]{AiharaIyama}).
\begin{lemma}\label{l:wakamatsu-lemma}
Let $\mathcal{M}$ be a subcategory of $\mathcal{T}$ such that $\mathcal{M}*\mathcal{M}\subseteq \mathcal{M}$.
\begin{itemize}
\item[(1)]  If $\mathcal{M}$ is contravariantly finite, then $(\mathcal{M}, \mathcal{M}^\perp)$ is a torsion pair of $\mathcal{T}$;
\item[(2)] If $\mathcal{M}$ is covariantly finite, then $(\!^\perp\mathcal{M}, \mathcal{M})$ is a torsion pair of $\mathcal{T}$.
\end{itemize}
\end{lemma}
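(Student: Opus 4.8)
The plan is to check the two defining properties of a torsion pair for $(\mathcal{M},\mathcal{M}^\perp)$ in part~(1); part~(2) then follows by applying~(1) in the opposite category $\mathcal{T}^{op}$, where contravariant finiteness of $\mathcal{M}$ becomes covariant finiteness, $\mathcal{M}^\perp$ becomes $\!^\perp\mathcal{M}$, the hypothesis $\mathcal{M}*\mathcal{M}\subseteq\mathcal{M}$ is unchanged (both factors are $\mathcal{M}$), and a torsion pair has its two members interchanged. The vanishing $\Hom_{\mathcal{T}}(\mathcal{M},\mathcal{M}^\perp)=0$ is immediate from the definition of $\mathcal{M}^\perp$, so the content of~(1) is the factorization $\mathcal{M}*\mathcal{M}^\perp=\mathcal{T}$.

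Fix $Z\in\mathcal{T}$. As $\mathcal{M}$ is contravariantly finite and $\mathcal{T}$ is Krull--Schmidt, choose a minimal right $\mathcal{M}$-approximation $f\colon M\to Z$ (so $f$ is right minimal) and complete it to a triangle
\[
M\xrightarrow{\,f\,}Z\xrightarrow{\,\pi\,}N\xrightarrow{\,\delta\,}M[1].
\]
It suffices to prove $N\in\mathcal{M}^\perp$, for then this triangle places $Z$ in $\mathcal{M}*\mathcal{M}^\perp$ and $Z$ was arbitrary. So fix $M'\in\mathcal{M}$ and $g\colon M'\to N$; I will show $g=0$.

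Form a triangle on the composite $\delta g\colon M'\to M[1]$,
\[
M\xrightarrow{\,a\,}E\xrightarrow{\,b\,}M'\xrightarrow{\,\delta g\,}M[1];
\]
this presents $E$ as an extension of $M'\in\mathcal{M}$ by $M\in\mathcal{M}$, so $E\in\mathcal{M}*\mathcal{M}\subseteq\mathcal{M}$. The commutative square formed by the connecting morphisms $\delta g$ and $\delta$ together with $g$ and $\mathbf{1}_{M[1]}$ can be completed, by axiom (TR3), to a morphism of triangles from the second display to the first whose components on $M$ and $M[1]$ are identities; writing $\phi\colon E\to Z$ for its middle component we get $\phi a=f$ and $\pi\phi=gb$. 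Since $E\in\mathcal{M}$ and $f$ is a right $\mathcal{M}$-approximation, $\phi$ factors as $\phi=f\psi$ with $\psi\colon E\to M$; then $f(\psi a)=\phi a=f$, so right minimality of $f$ makes $\psi a$ an automorphism of $M$. Hence $a$ is a split monomorphism, the triangle above splits, and $\delta g=0$. But $\delta g=0$ forces $g$ to factor through $\pi$, say $g=\pi g'$ with $g'\colon M'\to Z$; and $g'$ in turn factors through the approximation $f$, say $g'=fg''$, whence $g=\pi f g''=0$ because $\pi f=0$. Thus $N\in\mathcal{M}^\perp$, proving~(1); and~(2) follows as indicated (or, if preferred, by running the same argument with left $\mathcal{M}$-approximations and the triangle $K\to Z\to M\to K[1]$ obtained from a minimal left $\mathcal{M}$-approximation $Z\to M$).

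I expect the only genuine difficulty to be bookkeeping: identifying the precise square to feed into (TR3) --- it is the one built from the connecting maps $\delta g$, $\delta$ and the morphism $g$ --- and verifying that the resulting $\phi$ satisfies \emph{both} $\phi a=f$ and $\pi\phi=gb$ once the rotations are unwound. The other point to be careful about is the standard fact that in a Krull--Schmidt category a contravariantly finite subcategory admits minimal right approximations lying in the subcategory; this is exactly what upgrades the relation $f(\psi a)=f$ to invertibility of $\psi a$, and it is where the Krull--Schmidt hypothesis is used. Everything else is formal, using only the approximation property and the extension-closedness $\mathcal{M}*\mathcal{M}\subseteq\mathcal{M}$.
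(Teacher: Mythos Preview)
Your argument is correct and is the standard proof of the triangulated Wakamatsu lemma. Note, however, that the paper does not actually prove this lemma: it merely states it with a reference to \cite[Lemma~2.22]{AiharaIyama}, so there is no ``paper's own proof'' to compare against. Your write-up is essentially a reconstruction of the argument one finds in that reference (build the extension $E\in\mathcal{M}*\mathcal{M}\subseteq\mathcal{M}$ over $\delta g$, lift via the approximation, use right minimality to split $a$, conclude $\delta g=0$ and hence $g=0$), and your use of the Krull--Schmidt hypothesis to guarantee a \emph{minimal} right $\mathcal{M}$-approximation is exactly the point that makes the deduction $f(\psi a)=f\Rightarrow\psi a$ invertible go through. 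The duality reduction for part~(2) is also fine.
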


 Let $\mathcal{S}$ be a thick subcategory of $\mathcal{T}$ and  $\mathcal{T}/\mathcal{S}$ the Verdier quotient of $\mathcal{T}$ with respect to $\mathcal{S}$.  Denote by $\mathbb{L}: \mathcal{T}\to \mathcal{T}/\mathcal{S}$ the localization functor. 
 We denote by $\iota:\mathcal{S}^\perp \hookrightarrow \mathcal{T}$(resp. $\iota: ^\perp\mathcal{S}\hookrightarrow \mathcal{T}$)  the inclusion functor. The following is well-known, which identifies the Verdier quotient $\mathcal{T}/\mathcal{S}$ with certain subcategories of $\mathcal{T}$.
\begin{lemma}\label{l:perp-quotient}
Let $\mathcal{S}$ be a thick subcategory of $\mathcal{T}$.
\begin{enumerate}
\item If $\mathcal{S}$ is contravariantly finite, then the composition $\mathbb{L}\circ \iota: \mathcal{S}^\perp\to \mathcal{T}/\mathcal{S}$ is an equivalence of triangulated categories;
\item If  $\mathcal{S}$ is covariantly finite, then the composition $\mathbb{L}\circ \iota:~  ^\perp\mathcal{S}\to \mathcal{T}/\mathcal{S}$ is an equivalence of triangulated categories.
\end{enumerate}
\end{lemma}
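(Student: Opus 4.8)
The plan is to derive the statement from the torsion pairs produced by Wakamatsu's Lemma together with the calculus of fractions in the Verdier quotient; I describe part $(1)$, part $(2)$ being dual. Since $\mathcal{S}$ is thick it is in particular a triangulated subcategory, so $\mathcal{S}*\mathcal{S}\subseteq\mathcal{S}$ and $\mathcal{S}^\perp$ is a thick (hence triangulated) subcategory of $\mathcal{T}$. Consequently $\iota$ is exact and so is $\mathbb{L}\circ\iota$, so it will be enough to prove that $\mathbb{L}\circ\iota$ is essentially surjective and fully faithful.

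First I would settle essential surjectivity. By Lemma~\ref{l:wakamatsu-lemma}(1), contravariant finiteness of $\mathcal{S}$ yields a torsion pair $(\mathcal{S},\mathcal{S}^\perp)$, so every $T\in\mathcal{T}$ sits in a triangle $S_T\to T\to T'\to S_T[1]$ with $S_T\in\mathcal{S}$ and $T'\in\mathcal{S}^\perp$. Applying $\mathbb{L}$ and using $\mathbb{L}(S_T)=0$ gives an isomorphism $\mathbb{L}(T)\cong\mathbb{L}(T')=(\mathbb{L}\circ\iota)(T')$ in $\mathcal{T}/\mathcal{S}$; since every object of $\mathcal{T}/\mathcal{S}$ is of the form $\mathbb{L}(T)$, this is exactly essential surjectivity. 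This is the only place where the finiteness hypothesis is used.

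Next I would prove full faithfulness, whose core is the following observation: for every $N\in\mathcal{S}^\perp$ and every morphism $s\colon M''\to M$ of $\mathcal{T}$ whose cone lies in $\mathcal{S}$, the restriction $s^{*}\colon\Hom_{\mathcal{T}}(M,N)\to\Hom_{\mathcal{T}}(M'',N)$ is bijective. Indeed, embed $s$ in a triangle $C[-1]\to M''\xrightarrow{\,s\,}M\to C$ with $C\in\mathcal{S}$ and apply $\Hom_{\mathcal{T}}(-,N)$; as $\mathcal{S}^\perp$ is closed under shifts we have $\Hom_{\mathcal{T}}(C,N)=0$ and $\Hom_{\mathcal{T}}(C[-1],N)\cong\Hom_{\mathcal{T}}(C,N[1])=0$, so the long exact sequence forces $s^{*}$ to be an isomorphism. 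Now fix $M,N\in\mathcal{S}^\perp$. If $f\colon M\to N$ becomes zero in $\mathcal{T}/\mathcal{S}$, there is $s\colon M''\to M$ with cone in $\mathcal{S}$ and $fs=0$ in $\mathcal{T}$, so injectivity of $s^{*}$ gives $f=0$: faithfulness. For fullness, any morphism $M\to N$ in $\mathcal{T}/\mathcal{S}$ is represented (by the calculus of fractions) by a roof $M\xleftarrow{\,s\,}M''\xrightarrow{\,f\,}N$ with cone of $s$ in $\mathcal{S}$; by the observation there is $\widetilde f\colon M\to N$ with $\widetilde f\circ s=f$, and then $\mathbb{L}(\widetilde f)=\mathbb{L}(f)\circ\mathbb{L}(s)^{-1}$ is the given morphism. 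Hence $\mathbb{L}\circ\iota$ is an exact, fully faithful, essentially surjective functor, i.e.\ a triangle equivalence. Part $(2)$ is obtained by the dual argument: Lemma~\ref{l:wakamatsu-lemma}(2) supplies the torsion pair $({}^\perp\mathcal{S},\mathcal{S})$, essential surjectivity follows as above, and full faithfulness uses co-roofs $M\xrightarrow{\,f\,}N''\xleftarrow{\,t\,}N$ together with the dual statement that $t_{*}\colon\Hom_{\mathcal{T}}(M,N)\to\Hom_{\mathcal{T}}(M,N'')$ is bijective whenever $M\in{}^\perp\mathcal{S}$ and the cone of $t$ lies in $\mathcal{S}$, proved by applying $\Hom_{\mathcal{T}}(M,-)$ to the triangle defining $t$ and using that ${}^\perp\mathcal{S}$ is closed under shifts.

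I do not expect a genuine obstacle, as this is a standard feature of Verdier localization along a perpendicular subcategory. The one point demanding care is keeping the variances consistent: one must represent morphisms of $\mathcal{T}/\mathcal{S}$ by fractions on whichever side ($M''\to M$ in part $(1)$, $N\to N''$ in part $(2)$) makes the relevant $\Hom$-restriction map an isomorphism against the vanishing available from $\mathcal{S}^\perp$, respectively ${}^\perp\mathcal{S}$; a careless write-up could mismatch these and break the fullness step.
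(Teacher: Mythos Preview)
Your proposal is correct and follows essentially the same approach as the paper's own proof: both obtain essential surjectivity from the torsion pair supplied by Wakamatsu's Lemma (Lemma~\ref{l:wakamatsu-lemma}), and both treat full faithfulness as a direct verification via the calculus of fractions in the Verdier quotient. The paper simply asserts that $\Hom_{\mathcal{T}}(X,Y)\cong\Hom_{\mathcal{T}/\mathcal{S}}(\mathscr{L}(X),\mathscr{L}(Y))$ is ``straightforward to check'', whereas you spell out that check in detail; your roof/co-roof bookkeeping is exactly the standard way to unpack that claim.
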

\begin{proof}
It is straightforward to check that the functor $\mathscr{L}:=\mathbb{L}\circ \iota$ induces an isomorphism \[\Hom_{\mathcal{T}}(X,Y)\cong \Hom_{\mathcal{T}/\mathcal{S}}(\mathscr{L}(X),\mathscr{L}(Y))\] for any $X,Y\in \mathcal{S}^\perp$ (resp. $^\perp \mathcal{S}$).
On the other hand, the functor $\mathscr{L}$ is dense by Lemma \ref{l:wakamatsu-lemma}.
\end{proof}

\subsection{Silting theory}\label{s:silting-theory}
We follow \cite{AiharaIyama}. For simplicity, we only consider $\Hom$-finite Krull-Schmidt triangulated categories and silting objects.

Let $\mathcal{T}$ be a $\Hom$-finite Krull-Schmidt triangulated category. An object $M$ of $\mathcal{T}$ is a {\it presilting} object if $\Hom_\mathcal{T}(M,M[i])=0$ for all $i>0$. A presilting object $M$ is {\it silting} if $\opname{thick}(M)=\mathcal{T}$.  A silting object $M$ of $\mathcal{T}$ is a {\it tilting object} if $\Hom_\mathcal{T}(M,M[i])=0$ for $i\neq 0$.
It is known that there exist $\Hom$-finite Krull-Schmidt triangulated categories which do not admit silting objects.

Let $T=M\oplus \overline{T}$ be a basic silting object of $\mathcal{T}$. Consider the triangle
\[N\to T_M\xrightarrow{f_M}M\to N[1],
\]
where $f_M$ is a minimal right $\add \overline{T}$-approximation of $M$. According to  \cite[Theorem 2.31]{AiharaIyama},  $N\oplus \overline{T}$ is a basic silting object of $\mathcal{T}$ and $N\oplus \overline{T}$ is called the {\it right mutation} of $T$ with respect to $M$. Dually, if we consider the triangle induced by a minimal left $\add \overline{T}$-approximation of $M$, we obtain the {\it left mutation} of $T$ with respect to $M$.

Let $\mathcal{T}$ be a $\Hom$-finite Krull-Schmidt triangulated category with a silting object $T$. It follows from \cite[Theorem 2.27]{AiharaIyama} that the Grothendieck group $\go(\mathcal{T})$ of $\mathcal{T}$ is a free abelian group of rank $|T|$. In particular, the images of the indecomposable direct summands of $T$ in $\go(\mathcal{T})$ form a $\Z$-basis of $\go(\mathcal{T})$. As a consequence, each silting object of $\mathcal{T}$ has the same number of pairwise non-isomorphic indecomposable direct summands. A presilting object $M\in \mathcal{T}$ is a {\it partial silting} object if there is an object $N\in \mathcal{T}$ such that $M\oplus N$ is a silting object.
It is an open question that whether a presilting object in $\mathcal{T}$ is a partial silting object (cf. \cite[Question 3.13]{BY})?

We denote by $\opname{silt} \mathcal{T}$ the set of isomorphism classes of basic silting objects of $\mathcal{T}$. 
The following reduction theorem plays a central role in our investigation.
\begin{theorem}\cite[Theorem 2.37]{AiharaIyama}\label{t:silting-reduction}
	Let $\mathcal{T}$ be a $\Hom$-finite Krull-Schmidt triangulated category, $\mathcal{S}$ a functorially finite thick subcategory of $\mathcal{T}$ and $\mathcal{T}/\mathcal{S}$ the Verdier quotient. Denote by $\mathbb{L}:\mathcal{T}\to \mathcal{T}/\mathcal{S}$ the localization functor. For any $D\in \opname{silt}\ \mathcal{S}$,  there is a bijective map
	\[\{T\in \opname{silt}\ \mathcal{T}~|~D\in \add T\}\to \opname{silt}\ \mathcal{T}/\mathcal{S}
	\] given by $T\mapsto \mathbb{L}(T) $.
\end{theorem}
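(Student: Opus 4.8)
The plan is to construct the inverse map explicitly, as a Bongartz-type completion against $D$, and then to verify that the two constructions are mutually inverse; essentially all of the difficulty is concentrated in a single phenomenon — that a presilting object remains presilting when it is transported between $\mathcal{T}$ and $\mathcal{T}/\mathcal{S}$. Since $D$ is silting in $\mathcal{S}$ we have $\mathcal{S}=\opname{thick}(D)$, and we work with basic representatives throughout.

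First I would fix the ambient picture. The thick subcategory $\mathcal{S}$ is triangulated, hence closed under extensions, and functorially finite, so Wakamatsu's Lemma~\ref{l:wakamatsu-lemma} yields two torsion pairs $(\mathcal{S},\mathcal{S}^{\perp})$ and $({}^{\perp}\mathcal{S},\mathcal{S})$ on $\mathcal{T}$; for $X\in\mathcal{T}$ I write $S_X\to X\to X^{\perp}\to S_X[1]$ for the decomposition coming from the first pair, with $S_X\in\mathcal{S}$ and $X^{\perp}\in\mathcal{S}^{\perp}$. By Lemma~\ref{l:perp-quotient}, $\mathbb{L}$ restricts to triangle equivalences $\mathcal{S}^{\perp}\xrightarrow{\sim}\mathcal{T}/\mathcal{S}$ and ${}^{\perp}\mathcal{S}\xrightarrow{\sim}\mathcal{T}/\mathcal{S}$; in particular $\mathcal{T}/\mathcal{S}$ is $\Hom$-finite Krull--Schmidt, so ``silting object'' there is meaningful, and $X\mapsto X^{\perp}$ is a quasi-inverse of $\mathbb{L}|_{\mathcal{S}^{\perp}}$. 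Since $\mathbb{L}(D)=0$, for $T=D\oplus U$ one has $\mathbb{L}(T)=\mathbb{L}(U)$, which corresponds to $U^{\perp}\in\mathcal{S}^{\perp}$.

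Next, well-definedness of $\Phi\colon T\mapsto\mathbb{L}(T)$. Given basic $T=D\oplus U\in\opname{silt}\mathcal{T}$, generation is immediate: $\opname{thick}(\mathbb{L}(T))\supseteq\mathbb{L}(\opname{thick}(T))=\mathbb{L}(\mathcal{T})=\mathcal{T}/\mathcal{S}$. Transporting through the equivalence $\mathcal{S}^{\perp}\simeq\mathcal{T}/\mathcal{S}$, the presilting property of $\mathbb{L}(T)$ becomes $\Hom_{\mathcal{T}}(U^{\perp},U^{\perp}[i])=0$ for $i>0$; applying $\Hom_{\mathcal{T}}(-,U^{\perp}[i])$ and then $\Hom_{\mathcal{T}}(U,-)$ to the torsion triangle of $U$, and using that $\mathcal{S}^{\perp}$-objects receive no maps from $\mathcal{S}$ together with $\Hom(U,U[i])=0$, this collapses to the single statement $\Hom_{\mathcal{T}}(U,S_U[j])=0$ for $j\ge 2$. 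This is the \emph{main obstacle}: since $\Hom(U,\mathcal{S}[j])$ need not vanish, it is not a formal manipulation. The decisive extra input is that $D$ is a silting — not merely a presilting — object of $\mathcal{S}$: this equips $\mathcal{S}$ with a bounded co-$t$-structure whose coheart is $\add D$, and, running silting mutation of $T$ over $\add D$ (\cite[Theorem 2.31]{AiharaIyama}), one can arrange $S_U$ to lie in a truncated ``non-positive'' part of that co-$t$-structure, where $\Hom(U,D[{\ge}1])=0$ forces the vanishing. I would in fact organise this as an induction on $|D|$: deleting one indecomposable summand of $D$ replaces ``$\mathcal{S}$'' by $\opname{thick}$ of a single indecomposable presilting object, where the co-$t$-structure bookkeeping is minimal, and the reduction for $D$ factors as the composite of these one-summand reductions — which simultaneously proves well-definedness and sets up the inductive step.

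Finally the bijectivity. For surjectivity, given $N\in\opname{silt}(\mathcal{T}/\mathcal{S})$, choose $N^{\vee}\in\mathcal{S}^{\perp}$ (or in ${}^{\perp}\mathcal{S}$, as the co-$t$-structure dictates) with $\mathbb{L}(N^{\vee})\cong N$ — it is automatically presilting in $\mathcal{T}$ — and form the Bongartz-type completion of $D\oplus N^{\vee}$: a triangle relating $N^{\vee}$ and a new object $U$ modulo $\add D$, built from a minimal $\add D$-approximation, and set $T:=D\oplus U$. Applying $\mathbb{L}$ and using $\mathbb{L}(\add D)=0$ gives $\mathbb{L}(T)\cong\mathbb{L}(N^{\vee})\cong N$; one has $D\in\add T$, and $\opname{thick}(T)=\mathcal{T}$ because $\opname{thick}(T)\supseteq\mathcal{S}$ and $\mathbb{L}(\opname{thick}(T))$ generates $\opname{thick}(N)=\mathcal{T}/\mathcal{S}$. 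The only substantial point is that $T$ is presilting, which is the same $\Hom$-vanishing once more, now run through the completion triangle with ``$N$ presilting'' and ``$D$ presilting'', and is where the compatibility of the completion with the co-$t$-structure of $D$ is used; hence $T\in\opname{silt}\mathcal{T}$ with $\Phi(T)=N$. For injectivity, if $\mathbb{L}(T)\cong\mathbb{L}(T')$ with $D$ a common summand, then $U^{\perp}\cong(U')^{\perp}$ in $\mathcal{S}^{\perp}$, and since $U$ and $U'$ are each recovered from this common object by the Bongartz-type completion against $D$, which is unique up to isomorphism, we get $U\cong U'$ and $T\cong T'$. As a consistency check, the localisation sequence $\go(\mathcal{S})\to\go(\mathcal{T})\to\go(\mathcal{T}/\mathcal{S})\to 0$ together with \cite[Theorem 2.27]{AiharaIyama} gives $|N|=|T|-|D|=|U|$ on both sides, so no indecomposable summands are lost under $\Phi$. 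In summary, the one genuinely hard ingredient, and it occurs twice, is the stability of the presilting property between $\mathcal{T}$ and $\mathcal{T}/\mathcal{S}$, which is exactly what forces the silting (not merely presilting) hypothesis on $D$ and the reduction to the case $|D|=1$.
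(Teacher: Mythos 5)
Your blueprint — identify $\mathcal{T}/\mathcal{S}$ with a perpendicular subcategory via Wakamatsu, transfer the presilting condition through a decomposition triangle, and build the inverse by a Bongartz-type completion governed by the co-$t$-structure of $D$ — is the right shape and is close in spirit to Aihara--Iyama's actual argument, and your reduction of well-definedness to $\Hom_{\mathcal{T}}(U,S_U[j])=0$ for $j\geq 2$ is carried out correctly. However, there are genuine gaps at exactly the two points you flag as hard.

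The claim that ``running silting mutation of $T$ over $\add D$ one can arrange $S_U$ to lie in a truncated non-positive part'' does not work as stated: $S_U$ is determined up to isomorphism by $U$ through the fixed torsion pair $(\mathcal{S},\mathcal{S}^\perp)$, so mutating $T$ does not let you relocate it. What Aihara--Iyama actually do is abandon $(\mathcal{S},\mathcal{S}^\perp)$ for the finer torsion pair $({}^{\perp}\mathcal{S}_D^{<0},\,\mathcal{S}_D^{<0})$ coming from the covariantly finite class $\mathcal{S}_D^{<0}=\bigcup_{l\geq 0}\add D[1]*\cdots*\add D[l+1]$; in that decomposition the ``$\mathcal{S}$-part'' of $U$ is forced into $\mathcal{S}_D^{<0}$, where $\Hom(U,D[\geq 1])=0$ does propagate through the $*$-product. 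You need that refined torsion pair from the outset, not a mutation argument.

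Similarly, the completion in the surjectivity step must be taken with a minimal left $\mathcal{S}_D^{<0}$-approximation $N\to S_N[1]$ (this is precisely what the paper records immediately after the theorem), not a mere $\add D$-approximation. With only a triangle $D'\to T_N\to N\to D'[1]$ for $D'\in\add D$, the long exact sequence for $\Hom(-,D[i])$ produces the term $\Hom(N,D[i])$, which need not vanish for $N\in\mathcal{S}^\perp$ and $i>0$, so $T_N\oplus D$ is not visibly presilting; the $*$-closure $\mathcal{S}_D^{<0}$ is exactly what absorbs these obstructions, and that is a separate computation rather than ``the same $\Hom$-vanishing once more.'' Finally, injectivity via ``uniqueness of the completion'' presupposes that the completion applied to $\mathbb{L}(T)$ returns the original $U$, i.e.\ that the map $g:U^\perp\to S_U[1]$ in the decomposition of $U$ is already a minimal left $\mathcal{S}_D^{<0}$-approximation; this has to be proved, not assumed. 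The induction on $|D|$ is a harmless organizational device but sidesteps none of this — the full co-$t$-structure argument is already required when $|D|=1$.
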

Let us recall the inverse map of the bijection following the proof of \cite[Theorem 2.37]{AiharaIyama}.
Denote by $\mathcal{S}_D^{\leq 0}:=\cup_{l\geq 0}\add D*\add D[1]*\cdots*\add D[l]$ and $\mathcal{S}_{D}^{<0}:=\mathcal{S}_D^{\leq 0}[1]$. It is known that $\mathcal{S}_{D}^{<0}$ is covariantly finite in $\mathcal{T}$.  Since $\mathcal{S}$ is functorially finite, we may identify $ \mathcal{T}/\mathcal{S}$ with $\mathcal{S}^\perp$. Let $N\in \mathcal{S}^\perp$ be a silting object of $\mathcal{S}^\perp$.
Consider the following triangle
\[
S_N\to T_N\to N\xrightarrow{g}S_N[1],
\]
where $S_N[1]\in \mathcal{S}_{D}^{<0}$ and $g$ is a minimal left $\mathcal{S}_{D}^{<0}$-approximation. According to the proof of \cite[Theorem 2.37]{AiharaIyama},  $T_N\oplus D$ is a silting object of $\mathcal{T}$ such that $\mathbb{L}(T_N\oplus D)=\mathbb{L}(N)$.

\subsection{Simple-minded collections}\label{ss:smc}
Let $\mathcal{T}$ be a $\Hom$-finite $k$-linear triangulated category and $\mathcal{X}=\{X_1,\ldots, X_r\}$ a collection of objects. We call $\mathcal{X}$ a {\it pre-simple-minded collection(=pre-SMC)} if the following conditions hold for $i,j=1,\ldots, r$
\begin{enumerate}
\item[$\bullet$] $\Hom_\mathcal{T}(X_i,X_j[m])=0$ for any $m<0$;
\item[$\bullet$] $\End_{\mathcal{T}}(X_i)$ is a division algebra and $\Hom_\mathcal{T}(X_i,X_j)$ vanishes for $i\neq j$;
\end{enumerate}
In particular, every object in a pre-SMC is indecomposable.
For a pre-SMC $\mathcal{X}$, its {\it $\opname{Ext}$-quiver} $Q_\mathcal{X}$ is defined as follows.
\begin{itemize}
\item The vertices of $Q_\mathcal{X}$ are indexed by objects of $\mathcal{X}$;
\item  For $X_i,X_j\in \mathcal{X}$,  there are $\frac{\dim_k\Hom_\mathcal{T}(X_i,X_j[1])}{\dim_kEnd_\mathcal{T}(X_i)}$ arrows from $X_i$ to $X_j$.
\end{itemize}

A pre-SMC $\mathcal{X}$ of $\mathcal{T}$ is a {\it simple-minded collection}(=SMC)({\it cohomologically Schurian} in \cite{Al}) if  $\opname{thick}(\mathcal{X})=\mathcal{T}$.  
 Similar to the case of silting objects, if $\mathcal{T}$ admits a SMC $\mathcal{X}$, then the Grothendieck group $\go(\mathcal{T})$ of $\mathcal{T}$ is a free abelian group of rank $|\mathcal{X}|$. We denote by $\opname{SMC} \mathcal{T}$ the set of isomorphism classes of SMCs of $\mathcal{T}$.
 
 Let $\mathcal{R}$ be a pre-SMC of $\mathcal{T}$.  Denote by $\opname{SMC}_\mathcal{R}\mathcal{T}$ the set of  isomorphism classes of SMCs of $\mathcal{T}$ containing $\mathcal{R}$. Let $\mathcal{H}_\mathcal{R}$ be the smallest extension-closed subcategory of $\mathcal{T}$ containing $\mathcal{R}$. Define 
 \[\mathcal{Z}:=\mathcal{R}[\geq 0]^\perp\cap \ ^\perp\mathcal{R}[\leq 0].
 \]
The following reduction theorem for SMCs has been established in \cite{Jin}.
\begin{theorem}\label{t:smc-reduction}\cite[Theorem 3.1]{Jin}
Assume that $\mathcal{H}_\mathcal{R}$ satisfies the following conditions:
\begin{itemize}
\item $\mathcal{H}_\mathcal{R}$ is contravariantly finite in $\mathcal{R}[>0]^\perp$ and covariantly finite in $^\perp\mathcal{R}[<0]$;
\item For any $X\in \mathcal{T}$, we have $\Hom_\mathcal{T}(X,\mathcal{H}_\mathcal{R}[i])=0=\Hom_\mathcal{T}(\mathcal{H}_\mathcal{R},X[i])$ for $i\ll 0$.
\end{itemize}
Then
\begin{enumerate}
\item The composition $\mathcal{Z}\hookrightarrow \mathcal{T}\to \mathcal{T}/\opname{thick}(\mathcal{R})$ is an additive equivalence $\mathcal{Z}\xrightarrow{\sim}\mathcal{T}/\opname{thick}(\mathcal{R})$;
\item There is a bijection
\[\opname{SMC}_\mathcal{R}\mathcal{T}\to \opname{SMC} \mathcal{T}/\opname{thick}(\mathcal{R})
\]
sending $\mathcal{X}\in \opname{SMC}_\mathcal{R}\mathcal{T}$ to $\mathcal{X}\backslash \mathcal{R}\in \opname{SMC}\mathcal{T}/\opname{thick}(\mathcal{R})$.
\end{enumerate}
\end{theorem}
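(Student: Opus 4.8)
The plan is to prove part~(1)---that the composite $\mathscr{L}\colon\mathcal{Z}\hookrightarrow\mathcal{T}\to\mathcal{T}/\opname{thick}(\mathcal{R})$ is an additive equivalence---by the method of Iyama--Yoshino reduction and of the silting reduction theorem~\ref{t:silting-reduction} (with the co-$t$-structure there replaced by the $t$-structure attached to $\mathcal{R}$); part~(2) then follows formally. Write $\mathcal{S}:=\opname{thick}(\mathcal{R})$ and $\mathcal{A}:=\mathcal{H}_\mathcal{R}$. The first step is to record the local structure of $\mathcal{S}$: since $\mathcal{R}$ is a pre-SMC it is an SMC of $\mathcal{S}$, so by the standard reconstruction result for simple-minded collections (see \cite{KY,Al}), $\mathcal{A}$ is a finite-length abelian category whose simple objects are the members of $\mathcal{R}$, one has $\Hom_\mathcal{T}(\mathcal{A},\mathcal{A}[i])=0$ for $i<0$, and $\mathcal{S}=\bigcup_{a\le b}\mathcal{A}[a]*\mathcal{A}[a+1]*\cdots*\mathcal{A}[b]$; in particular every $S\in\mathcal{S}$ sits in a triangle $S'\to S\to S''\to S'[1]$ with $S'\in\mathcal{A}*\mathcal{A}[1]*\cdots$ and $S''\in\mathcal{A}[-1]*\mathcal{A}[-2]*\cdots$. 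A d\'evissage along $\mathcal{R}$ (via the long exact sequences from the extensions building $\mathcal{A}$ out of $\mathcal{R}$) shows that $X\in\mathcal{Z}$ if and only if $\Hom_\mathcal{T}(\mathcal{A},X[j])=0=\Hom_\mathcal{T}(X,\mathcal{A}[j])$ for all $j\le 0$.

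For part~(1), I would first show that an arbitrary $W\in\mathcal{T}$ admits two successive approximation triangles, with cones in $\mathcal{S}$, carrying it into $\mathcal{Z}$: one assembled from (minimal) right $\mathcal{A}[i]$-approximations for the finitely many $i\ge 0$ with $\Hom_\mathcal{T}(\mathcal{A}[i],W)\neq 0$, and one from (minimal) left $\mathcal{A}[i]$-approximations for the finitely many $i\le 0$ with $\Hom_\mathcal{T}(W,\mathcal{A}[i])\neq 0$; processing these degrees in the order prescribed by the $t$-structure---using the contravariant finiteness of $\mathcal{A}$ in $\mathcal{R}[>0]^\perp$ and the covariant finiteness of $\mathcal{A}$ in ${}^\perp\mathcal{R}[<0]$ at the appropriate stages---produces $W''\in\mathcal{Z}$ with $\mathbb{L}(W'')\cong\mathbb{L}(W)$, so $\mathscr{L}$ is dense. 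The same machinery, via Wakamatsu's Lemma~\ref{l:wakamatsu-lemma}, realizes the joins $\mathcal{A}[1]*\mathcal{A}[2]*\cdots$ and $\mathcal{A}[-1]*\mathcal{A}[-2]*\cdots$ as halves of torsion pairs whose other halves intersect in $\mathcal{Z}$, and feeding these into the calculus of fractions gives $\Hom_\mathcal{T}(X,Y)\xrightarrow{\sim}\Hom_{\mathcal{T}/\mathcal{S}}(\mathscr{L}(X),\mathscr{L}(Y))$ for $X,Y\in\mathcal{Z}$, so $\mathscr{L}$ is fully faithful. The injectivity half is immediate from the triangle displayed above: a morphism $X\to Y$ killed in $\mathcal{T}/\mathcal{S}$ factors through some $S\in\mathcal{S}$, then through $S'$ since $\Hom_\mathcal{T}(X,S'')=0$, and vanishes since $\Hom_\mathcal{T}(S',Y)=0$; the same computation shows, more generally, that $\Hom_\mathcal{T}(X,Y[m])\hookrightarrow\Hom_{\mathcal{T}/\mathcal{S}}(\mathbb{L}X,\mathbb{L}Y[m])$ for $X,Y\in\mathcal{Z}$ and $m\le 0$.

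The main obstacle is precisely the construction of these approximations: neither $\mathcal{S}$ nor the infinite joins $\mathcal{A}*\mathcal{A}[\pm 1]*\cdots$ is assumed functorially finite in $\mathcal{T}$, so the hypothesized ``one-$t$-degree-at-a-time'' finiteness of $\mathcal{A}$ must be bootstrapped into finiteness of these joins. Here the boundedness hypothesis $\Hom_\mathcal{T}(X,\mathcal{A}[i])=0=\Hom_\mathcal{T}(\mathcal{A},X[i])$ for $i\ll 0$ does the essential work: for a fixed $X$ it confines the nonvanishing $t$-degrees to a finite window, so that finitely many single-degree approximations can be spliced by the octahedral axiom into one approximation triangle; the genuinely delicate point is to carry out the two halves in the right order, so that the second does not re-create the obstructions removed by the first---this is handled exactly as in the proof of \cite[Theorem 2.37]{AiharaIyama}. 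Finally, part~(2) follows from part~(1) together with the injectivity just noted, exactly as \cite[Theorem 2.37]{AiharaIyama} derives its silting bijection from the reduction equivalence: for $\mathcal{X}\in\opname{SMC}_\mathcal{R}\mathcal{T}$ the simple-minded axioms give at once $\mathcal{X}\setminus\mathcal{R}\subseteq\mathcal{Z}$, and then full faithfulness of $\mathscr{L}$ and the injectivity $\Hom_\mathcal{T}(A,B[m])\hookrightarrow\Hom_{\mathcal{T}/\mathcal{S}}(\mathbb{L}A,\mathbb{L}B[m])$ ($A,B\in\mathcal{Z}$, $m\le 0$) show that $\mathcal{X}\mapsto\mathcal{X}\setminus\mathcal{R}$ is a bijection from $\opname{SMC}_\mathcal{R}\mathcal{T}$ onto $\opname{SMC}\,\mathcal{T}/\mathcal{S}$, with inverse sending an SMC of $\mathcal{T}/\mathcal{S}$ to its lift into $\mathcal{Z}$ enlarged by $\mathcal{R}$.
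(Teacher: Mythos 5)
This statement is not proved in the paper: it is imported verbatim as a citation of \cite[Theorem 3.1]{Jin}, so there is no in-paper argument against which to compare your reconstruction. Evaluated on its own terms, your outline correctly identifies the structural skeleton of the SMC-reduction theorem, and it is indeed the right one: realize $\mathcal{A}=\mathcal{H}_\mathcal{R}$ as the heart (with simples $\mathcal{R}$) of the bounded $t$-structure on $\mathcal{S}=\opname{thick}(\mathcal{R})$; characterize $\mathcal{Z}$ by d\'evissage as $\{X : \Hom_\mathcal{T}(\mathcal{A},X[j])=0=\Hom_\mathcal{T}(X,\mathcal{A}[j])\text{ for }j\le 0\}$; decompose each $S\in\mathcal{S}$ by $t$-truncation into $S'\to S\to S''$ with $S'\in\mathcal{A}*\mathcal{A}[1]*\cdots$ and $S''\in\mathcal{A}[-1]*\mathcal{A}[-2]*\cdots$ to get the injectivity of $\Hom_\mathcal{T}(X,Y[m])\to\Hom_{\mathcal{T}/\mathcal{S}}(\mathbb{L}X,\mathbb{L}Y[m])$ for $X,Y\in\mathcal{Z}$ and $m\le 0$; and deduce part~(2) formally. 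That computation is sound as written.

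The gap is in the density step, and it is larger than your sketch concedes. You propose to process degrees one at a time and then say the delicate ordering ``is handled exactly as in the proof of \cite[Theorem 2.37]{AiharaIyama}.'' This does not follow: in Theorem~\ref{t:silting-reduction} the whole thick subcategory $\mathcal{S}$ is assumed functorially finite in $\mathcal{T}$, so the approximations against the co-aisle $\mathcal{S}_D^{<0}$ exist outright. Here one has only the much weaker, degree-local hypothesis that $\mathcal{A}$ is contravariantly finite inside $\mathcal{R}[>0]^\perp$ (not in $\mathcal{T}$, and not the whole aisle $\bigcup_N\mathcal{A}*\mathcal{A}[1]*\cdots*\mathcal{A}[N]$), and one must bootstrap. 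Concretely, given $W$ with $\Hom_\mathcal{T}(\mathcal{A}[>N],W)=0$, take the triangle $A_N\xrightarrow{f}W\to W_1\to A_N[1]$ over a right $\mathcal{A}[N]$-approximation; Wakamatsu gives $W_1\in\mathcal{A}[N]^\perp$, but the long exact sequence leaves $\Hom_\mathcal{T}(\mathcal{A}[N+1],W_1)$ identified with $\ker\bigl(f_*:\Hom(\mathcal{A}[N],A_N)\to\Hom(\mathcal{A}[N],W)\bigr)$, which is not zero for a generic right approximation --- so $W_1$ need not stay in $\mathcal{R}[>N]^\perp$, the perpendicular category $\mathcal{R}[>N]^\perp$ is not triangulated, and the naive iteration does not visibly terminate. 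Proving that $\ker f_*=0$ (equivalently, that one really does descend) uses the \emph{minimality} of the approximation together with the finite-length structure of $\mathcal{A}$ in an essential way, and interleaving the right-approximation cascade with the left-approximation cascade without the latter undoing the former requires its own octahedron bookkeeping. This is precisely the technical core of Jin's Theorem~3.1; it must be proved, not waved at. Since in this paper's actual applications $\mathcal{R}=\{X_1\}$ is a single exceptional object, so $\mathcal{H}_\mathcal{R}=\add X_1$ and $\opname{thick}(X_1)$ is genuinely functorially finite in $\der^b(\mathcal{H})$, the simpler Aihara--Iyama-style argument \emph{would} go through there --- but it does not prove the general statement as you have set it up.
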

  We may regard $\mathcal{Z}$ as a triangulated category via the additive equivalence $\mathcal{Z}\xrightarrow{\sim}\mathcal{T}/\opname{thick}(\mathcal{R})$ in Theorem \ref{t:smc-reduction} (1). Denote by $\langle 1\rangle$ the suspension functor of $\mathcal{Z}$. Then for each object $Z\in \mathcal{Z}$, $Z\langle 1\rangle$ is determined by the following  triangle of $\mathcal{T}$
  \[R_Z\xrightarrow{f_Z} Z[1]\to Z\langle 1\rangle \to R_Z[1],
  \]
  where $f_Z$ is a minimal right $\mathcal{H}_\mathcal{R}$-approximation of $Z[1]$ (cf.  \cite[Lemma 3.4]{Jin}).

The inverse map of the bijection in Theorem \ref{t:smc-reduction} (2) is constructed as follows.
Let $\overline{\mathcal{X}}$ be a SMC of $\mathcal{T}/\opname{thick}(\mathcal{R})$. Denote by $\hat{\mathcal{X}}\subset \mathcal{Z}$ the preimage of $\overline{\mathcal{X}}$ via the equivalence in $(1)$. Then $\hat{\mathcal{X}}\cup\mathcal{R}$ is a SMC of $\mathcal{T}$, which is the preimage of $\overline{\mathcal{X}}$.

\section{Hereditary abelian categories with tilting objects}\label{s:hereditary-cat}
\subsection{Hereditary abelian categories}
Let $\mathcal{H}$ be a hereditary abelian category and $\der^b(\mathcal{H})$ the bounded derived category of $\mathcal{H}$.  Recall that an object $M\in \der^b(\mathcal{H})$ is {\it rigid} if $\Hom_{\der^b(\mathcal{H})}(M,M[1])=0$. It is {\it exceptional} if it is rigid and indecomposable.
The following fundamental result is due to Happel and Ringel~\cite{HappelRingel}.
\begin{lemma}\label{l:HR-exceptional}
Let $E$ and $F$ be indecomposable objects in $\mathcal{H}$ such that $\Hom_{\der^b(\mathcal{H})}(F, E[1])=0$. Then any nonzero homomorphism $f:E\to F$ is a monomorphism or epimorphism. In particular, the endomorphism ring of an exceptional object is a division algebra.
\end{lemma}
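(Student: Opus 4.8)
The plan is to deduce the last sentence from the assertion about $f$, and to prove that assertion by a mapping-cone argument in $\der^b(\mathcal{H})$ in which the heredity of $\mathcal{H}$ is used exactly once. So I would argue by contradiction: suppose $f\colon E\to F$ is nonzero but is neither a monomorphism nor an epimorphism, factor $f=hg$ with $g\colon E\to I:=\im f$ the canonical epimorphism and $h\colon I\to F$ the canonical monomorphism, and put $K:=\ker f=\ker g$ and $C:=\cok f=\cok h$; by hypothesis $K,C,I$ are all nonzero. Since $\mathcal{H}$ is Krull--Schmidt and $E$, resp.\ $F$, is indecomposable, the short exact sequence $\alpha\colon 0\to K\to E\xrightarrow{g}I\to 0$, resp.\ $\beta\colon 0\to I\xrightarrow{h}F\to C\to 0$, is non-split; the only thing I will use is that the class $[\alpha]\in\Ext^1_{\mathcal{H}}(I,K)$, regarded as a morphism $I\to K[1]$ of $\der^b(\mathcal{H})$, is nonzero.

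Next I would move into $\der^b(\mathcal{H})$. Because $g$ is epic and $h$ is monic, $\mathrm{cone}(g)\cong K[1]$ and $\mathrm{cone}(h)\cong C$, and the octahedral axiom applied to $f=hg$ yields a triangle $K[1]\to\mathrm{cone}(f)\to C\to K[2]$. This is the one essential use of heredity: $\Ext^2_{\mathcal{H}}(C,K)=0$, so the map $C\to K[2]$ vanishes, the triangle splits, and $\mathrm{cone}(f)\cong K[1]\oplus C$. Hence there is a triangle $E\xrightarrow{f}F\xrightarrow{v}K[1]\oplus C\to E[1]$ whose second map has components $\eta\colon F\to K[1]$ and $\gamma\colon F\to C$; applying the cohomology functor $H^0$ to this triangle identifies $\gamma$ with the canonical epimorphism $F\to C$ (so $\ker\gamma=I$, with inclusion $h$), and the commutative square produced by the octahedron gives $\eta\circ h=[\alpha]$.

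Now I apply $\Hom_{\der^b(\mathcal{H})}(F,-)$ to this triangle. Using $\Hom_{\der^b(\mathcal{H})}(F,E[1])=\Ext^1_{\mathcal{H}}(F,E)=0$, the long exact sequence shows that $v_*\colon\End_{\mathcal{H}}(F)\to\Ext^1_{\mathcal{H}}(F,K)\oplus\Hom_{\mathcal{H}}(F,C)$ is surjective, so I may choose $\psi\in\End_{\mathcal{H}}(F)$ with $\eta\psi=0$ and $\gamma\psi=\gamma$. From $\gamma\psi=\gamma$ and $\ker\gamma=I$ I get $\psi-\id_F=hs$ for a (unique) $s\colon F\to I$. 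Since $F$ is indecomposable, $\End_{\mathcal{H}}(F)$ is local, and I split into two cases. If $\psi$ is invertible, then $\eta=(\eta\psi)\psi^{-1}=0$, whence $[\alpha]=\eta h=0$, contradicting that $\alpha$ is non-split. If $\psi$ is not invertible, then $hs=\psi-\id_F$ cannot lie in the maximal ideal of $\End_{\mathcal{H}}(F)$ (otherwise $\id_F$ would), so $hs$ is invertible; then $s(hs)^{-1}$ is a right inverse of $h$, so $h$ is a split epimorphism, hence --- being also monic --- an isomorphism, which forces $C=\cok h=0$ and contradicts that $f$ is not epic. In either case we reach a contradiction, so $f$ must be a monomorphism or an epimorphism.

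Finally, for the concluding clause, let $E$ be exceptional and $0\neq e\in\End_{\mathcal{H}}(E)$. By what was just shown $e$ is monic or epic; correspondingly, postcomposition with $e$, resp.\ precomposition with $e$, is an injective $k$-linear endomorphism of the finite-dimensional space $\End_{\mathcal{H}}(E)$, hence surjective, so $e$ admits a one-sided inverse in $\End_{\mathcal{H}}(E)$ and, being monic or epic, is an isomorphism. Thus every nonzero endomorphism of $E$ is invertible, i.e.\ $\End_{\mathcal{H}}(E)$ is a division algebra. I expect the main obstacle to be making the two identifications in the second paragraph precise --- that $\mathrm{cone}(f)$ really decomposes as $\ker(f)[1]\oplus\cok(f)$ (the sole point at which heredity enters) and that the $K[1]$-component of $v$ restricts along $h$ to $[\alpha]$; once those are in place, the remainder is formal manipulation together with the locality of $\End_{\mathcal{H}}(E)$ and $\End_{\mathcal{H}}(F)$.
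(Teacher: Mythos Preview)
The paper does not supply a proof of this lemma; it is quoted as a known result of Happel and Ringel, with a citation to \cite{HappelRingel}. So there is nothing to compare against in the paper itself.

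Your argument is correct. The cone decomposition $\mathrm{cone}(f)\cong K[1]\oplus C$ via heredity, together with the identification $\eta h=[\alpha]$, is a derived-category repackaging of the classical Happel--Ringel step: using $\Ext^2_{\mathcal{H}}(C,K)=0$ one shows that $h^*\colon \Ext^1_{\mathcal{H}}(F,K)\to\Ext^1_{\mathcal{H}}(I,K)$ is surjective and picks a lift $\eta$ of $[\alpha]$. Your surjectivity of $v_*$ then produces $\psi$, and the case split on whether $\psi$ is a unit in the local ring $\End_{\mathcal{H}}(F)$ is exactly the classical endgame. The only delicate point you flagged---that $\eta h=[\alpha]$---does follow from the octahedron: the commuting square $v\circ h=i\circ[\alpha]$ (with $i\colon K[1]\to\mathrm{cone}(f)$) combined with your retraction $r$ satisfying $ri=\id$ gives $\eta h=rvh=ri\,[\alpha]=[\alpha]$. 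The deduction of the final clause from the first assertion, using finite-dimensionality of $\End_{\mathcal{H}}(E)$, is also fine.
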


Let $\mathcal{M}$ be a full subcategory of $\der^b(\mathcal{H})$ and $M\in \mathcal{M}$ an indecomposable object. A path in $\mathcal{M}$ from $M$ to itself is a {\it cycle} in $\mathcal{M}$, that is a sequence of nonzero non-isomorphism between indecomposable objects in $\mathcal{M}$ of the form
\[M=M_0\xrightarrow{f_1}M_1\xrightarrow{f_2}M_2\to \cdots\xrightarrow{f_r}M_r=M.
\]
The following is a consequence of Lemma~\ref{l:HR-exceptional} (cf. \cite[Lemma 4.2]{Fu} or \cite[Corollary 4.2]{HappelRingel}).
\begin{lemma}\label{l:no-cycle}
Let $T$ be an object in $\der^b(\mathcal{H})$ such that $\Hom_{\der^b(\mathcal{H})}(T, T[1])=0$. Then the subcategory $\add T$ has no cycle.
\end{lemma}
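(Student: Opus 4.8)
The plan is to argue by contradiction: assuming $\add T$ contains a cycle, I would first transport it into $\mathcal{H}$ using hereditarity, and then eliminate a shortest cycle among suitable objects of $\mathcal{H}$. For the reduction, recall that, $\mathcal{H}$ being hereditary, every indecomposable object of $\der^b(\mathcal{H})$ is of the form $X[n]$ with $X\in\mathcal{H}$ indecomposable and $n\in\Z$, and that $\Hom_{\der^b(\mathcal{H})}(X[m],Y[n])\cong\Ext_{\mathcal{H}}^{n-m}(X,Y)$ vanishes unless $n-m\in\{0,1\}$. Given a cycle $M_0\xrightarrow{f_1}M_1\to\cdots\xrightarrow{f_r}M_r=M_0$ in $\add T$ and writing $M_\ell=X_\ell[n_\ell]$, the nonzero maps $f_\ell$ force $n_0\le n_1\le\cdots\le n_r=n_0$, hence all $n_\ell$ equal a common value $d$. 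Shifting by $-d$ we obtain a cycle $X_0\xrightarrow{g_1}X_1\to\cdots\xrightarrow{g_r}X_r=X_0$ of nonzero non-isomorphisms in $\mathcal{H}$ between indecomposables with each $X_\ell[d]\in\add T$; from $\Hom_{\der^b(\mathcal{H})}(T,T[1])=0$ we get $\Ext^1_{\mathcal{H}}(X_i,X_j)=\Hom_{\der^b(\mathcal{H})}(X_i[d],X_j[d][1])=0$ for all $i,j$. So it suffices to show that there is no cycle $Y_0\to\cdots\to Y_s=Y_0$ (with $s\ge 1$) of nonzero non-isomorphisms between indecomposable objects of $\mathcal{H}$ satisfying $\Ext^1_{\mathcal{H}}(Y_a,Y_b)=0$ for all $a,b$.

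Suppose such cycles exist and pick one, say $g_1,\dots,g_r$ as above, of minimal length $r$. By Lemma~\ref{l:HR-exceptional} applied to $X_{\ell-1}\to X_\ell$ (legitimate because $\Hom_{\der^b(\mathcal{H})}(X_\ell,X_{\ell-1}[1])=\Ext^1_{\mathcal{H}}(X_\ell,X_{\ell-1})=0$), each $g_\ell$ is a monomorphism or an epimorphism, and, being a non-isomorphism, a proper one. If $r=1$, then $g_1\colon X_0\to X_0$ is a nonzero non-invertible endomorphism of an indecomposable object, hence nilpotent (since $\mathcal{H}$ is Krull--Schmidt and $\End_{\mathcal{H}}(X_0)$ is a finite-dimensional local $k$-algebra), which is incompatible with $g_1$ being a monomorphism or an epimorphism (all powers of such would be nonzero). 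Hence $r\ge 2$. Applying the same nilpotency argument to $g_r\cdots g_1\colon X_0\to X_0$ — which would again be a nonzero non-invertible monomorphism, resp.\ epimorphism — shows that the $g_\ell$ cannot all be monomorphisms, nor all epimorphisms. Therefore, travelling around the cycle we find consecutive arrows $g_i$ (an epimorphism) and $g_{i+1}$ (a monomorphism), with indices read modulo $r$. Then $h:=g_{i+1}g_i\colon X_{i-1}\to X_{i+1}$ is nonzero (otherwise $\im g_i\subseteq\ker g_{i+1}=0$, forcing $g_i=0$) and not an isomorphism (otherwise $g_i$ would be a split monomorphism into the indecomposable $X_i$, hence invertible). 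Replacing the subpath $X_{i-1}\to X_i\to X_{i+1}$ by $X_{i-1}\xrightarrow{h}X_{i+1}$ yields a cycle of the same kind of length $r-1\ge 1$, contradicting minimality of $r$. This contradiction shows no such cycle exists, proving the lemma.

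I expect the only delicate points to be the following. The sole essential use of hereditarity beyond Lemma~\ref{l:HR-exceptional} is the degree collapse $n_0=\dots=n_r$, which is what moves the problem from $\der^b(\mathcal{H})$ into $\mathcal{H}$; one should be careful to phrase this so that the $\Ext^1$-vanishing among the $X_\ell$ is preserved. The other genuine subtlety is that the cycle must be shortened at an \emph{epi-then-mono} junction: the composite of an epimorphism followed by a monomorphism is automatically nonzero, whereas a mono-then-epi composite need not be, so shortening at an arbitrary pair of consecutive arrows fails. Everything else — the description of $\Hom$-spaces in $\der^b(\mathcal{H})$, Krull--Schmidt together with Fitting's lemma for $\End_{\mathcal{H}}(X_0)$, and the induction on the length of the cycle — is routine.
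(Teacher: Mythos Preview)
Your argument is correct and follows precisely the route the paper points to: it deduces the lemma from Lemma~\ref{l:HR-exceptional} (the Happel--Ringel mono/epi dichotomy), which is all the paper claims, deferring details to \cite[Lemma 4.2]{Fu} and \cite[Corollary 4.2]{HappelRingel}. Your degree-collapse reduction to $\mathcal{H}$, the minimal-cycle argument, and the shortening at an epi-then-mono junction are exactly the standard steps; a minor simplification is that once you know $\Ext^1_{\mathcal{H}}(X_0,X_0)=0$, Lemma~\ref{l:HR-exceptional} already gives that $\End_{\mathcal{H}}(X_0)$ is a division algebra, so the nilpotency detour via Fitting's lemma is not needed.
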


\subsection{Hereditary categories with tilting objects}
Let $\mathcal{H}$ be a hereditary abelian category.  A rigid object $T\in \mathcal{H}$ is a {\it tilting object} provided that for $X\in \mathcal{H}$ with $\Hom_{\mathcal{H}}(T,X)=0=\Ext^1_{\mathcal{H}}(T,X)$, we have $X=0$.

Throughout this subsection, we always assume that $\mathcal{H}$ admits a tilting object.
As a consequence, the Grothendieck group $\go(\der^b(\mathcal{H}))$ is a free abelian group of finite rank. We denote by $\opname{rank}\go(\der^b(\mathcal{H}))$  the rank of $\go(\der^b(\mathcal{H}))$. If $T\in \mathcal{H}$ is a tilting object, then $\opname{rank}\go(\der^b(\mathcal{H}))=|T|$.
The existence of tilting objects also implies that $\der^b(\mathcal{H})$ admits almost split triangles and hence $\mathcal{H}$ has almost split sequences \cite{HRS}.  
Denote by $\tau: \der^b(\mathcal{H})\to \der^b(\mathcal{H})$ the Auslander-Reiten(=AR) translation functor, which restricts to the AR translation $\tau:\mathcal{H}\to \mathcal{H}$.
The following is a reformulation of \cite[Proposition 1.2(a)]{HR}.
\begin{proposition}~\label{p:no-proj-inj}
	Assume that $\mathcal{H}$ is connected.
	If $\mathcal{H}$ is not equivalent to  $\mod H$ for a finite dimensional hereditary $k$-algebra $H$, then $\mathcal{H}$ has neither nonzero projective objects nor nonzero injective objects. Consequently, the AR translation $\tau: \mathcal{H}\to \mathcal{H}$ is an equivalence.
\end{proposition}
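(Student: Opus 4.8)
The plan is to deduce the proposition from \cite[Proposition~1.2(a)]{HR}, supplemented by a short Serre-duality computation for the assertion about $\tau$. Throughout I would use the facts recorded above: since $\mathcal{H}$ has a tilting object, $\der^b(\mathcal{H})$ is $\Hom$-finite and Krull--Schmidt, admits almost split triangles and hence a Serre functor $\mathbb{S}$ with $\tau=\mathbb{S}\circ[-1]$; moreover, as $\mathcal{H}$ is hereditary, every indecomposable object of $\der^b(\mathcal{H})$ has the form $N[j]$ with $N\in\mathcal{H}$ indecomposable and $j\in\Z$.

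First I would observe that $\mathcal{H}$ has a nonzero projective object if and only if it has a nonzero injective object. Indeed, let $P\in\mathcal{H}$ be a nonzero indecomposable projective and write $\mathbb{S}P\cong N[j]$ as above. Serre duality gives, naturally in $Y\in\mathcal{H}$,
\[
\Ext^{j}_{\mathcal{H}}(Y,N)\;\cong\;\Hom_{\der^b(\mathcal{H})}(Y,\mathbb{S}P)\;\cong\;D\Hom_{\mathcal{H}}(P,Y).
\]
Putting $Y=P$ shows this group is nonzero, so $0\le j\le 1$, while $\Ext^{1}_{\mathcal{H}}(P,N)=0$ (as $P$ is projective) excludes $j=1$; hence $\mathbb{S}P=N\in\mathcal{H}$ and $\Hom_{\mathcal{H}}(-,N)\cong D\Hom_{\mathcal{H}}(P,-)$ is exact, so $N=\mathbb{S}P$ is a nonzero injective object. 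The converse is the symmetric statement obtained by applying $\mathbb{S}^{-1}$.

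By this symmetry it remains to prove that if $\mathcal{H}$ is connected and admits a nonzero projective object, then $\mathcal{H}\simeq\mod H$ for a finite-dimensional hereditary $k$-algebra $H$. This is exactly \cite[Proposition~1.2(a)]{HR}; the idea is to start from a nonzero indecomposable projective, use the tilting object to bound $\mathcal{H}$ and connectedness to propagate projectivity, thereby exhibiting a projective generator $\mathbf{P}$, so that $H:=\End_{\mathcal{H}}(\mathbf{P})$ is finite-dimensional and hereditary and $\Hom_{\mathcal{H}}(\mathbf{P},-)\colon\mathcal{H}\xrightarrow{\sim}\mod H$. I expect this to be the main obstacle, and in the write-up I would invoke \cite[Proposition~1.2(a)]{HR} rather than reproduce its proof, since a self-contained argument would essentially re-derive part of the Happel--Reiten structure theory.

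Finally, for the consequence: suppose $\mathcal{H}$ is not equivalent to such a $\mod H$, so by the above $\mathcal{H}$ has neither nonzero projective nor nonzero injective objects. For an indecomposable $X\in\mathcal{H}$ write $\mathbb{S}X\cong N[j]$ with $N\in\mathcal{H}$ indecomposable; Serre duality now gives $\Ext^{j}_{\mathcal{H}}(Y,N)\cong D\Hom_{\mathcal{H}}(X,Y)$ naturally in $Y\in\mathcal{H}$, which forces $0\le j\le 1$, and $j=0$ would make $N$ a nonzero injective object, a contradiction. Hence $\mathbb{S}X=N[1]$, so $\mathbb{S}$ maps $\mathcal{H}$ into $\mathcal{H}[1]$ and, dually, $\mathbb{S}^{-1}$ maps $\mathcal{H}$ into $\mathcal{H}[-1]$. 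Therefore $\tau=\mathbb{S}\circ[-1]$ and $\tau^{-1}=[1]\circ\mathbb{S}^{-1}$ restrict to mutually quasi-inverse endofunctors of $\mathcal{H}$, and so $\tau\colon\mathcal{H}\to\mathcal{H}$ is an equivalence.
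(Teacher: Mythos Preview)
Your proof is correct and, like the paper, rests on \cite[Proposition~1.2(a)]{HR} for the absence of nonzero projectives. The difference lies in how the injective case and the equivalence $\tau\colon\mathcal{H}\to\mathcal{H}$ are handled. The paper argues via almost split triangles: having no projectives gives $\tau X\in\mathcal{H}$ for every indecomposable $X$; then, examining the AR triangle $X\to E\to\tau^{-1}X\xrightarrow{h}X[1]$ and using that $h\neq 0$ together with $\tau(\mathcal{H})\subseteq\mathcal{H}$, one rules out $\tau^{-1}X\in\mathcal{H}[1]$ and concludes $\tau^{-1}X\in\mathcal{H}$, so there are no injectives either. You instead work directly with the Serre functor, showing that $\mathbb{S}$ exchanges projectives and injectives and then that the absence of both forces $\mathbb{S}(\mathcal{H})\subseteq\mathcal{H}[1]$. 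The two arguments are essentially dual reformulations (since $\tau=\mathbb{S}\circ[-1]$), and yours is arguably a bit cleaner in that it avoids parsing the middle term of an AR triangle.

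One small point: in your third paragraph, the implication ``$j=0$ would make $N$ injective'' does not follow from the displayed isomorphism $\Hom_{\mathcal H}(Y,N)\cong D\Hom_{\mathcal H}(X,Y)$ by the same reasoning you used in the first paragraph, since there $P$ was projective and here $X$ is not. You should either observe that this isomorphism identifies a left-exact contravariant functor with a right-exact one, forcing both to be exact (whence $N$ injective \emph{and} $X$ projective, giving a contradiction either way), or simply compute $\Ext^1_{\mathcal H}(Y,N)=\Hom_{\der^b(\mathcal{H})}(Y,\mathbb{S}X[1])\cong D\Hom_{\der^b(\mathcal{H})}(X,Y[-1])=0$. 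Either fix is immediate.
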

\begin{proof}
	According to \cite[Proposition 1.2(a)]{HR}, $\mathcal{H}$ has no nonzero projective objects. Hence for each indecomposable $X\in \mathcal{H}$, we have $\tau X\in \mathcal{H}$. Let $X$ be an indecomposable object of $\mathcal{H}$. Let $X\to E\to \tau^{-1}X\xrightarrow{h} X[1]$ be the almost split triangle starting at $X$. To show that $X$ is not injective, it suffices to show that $\tau^{-1}X\in \mathcal{H}$. Since $h$ is nonzero, we conclude that $\tau^{-1}X\in \mathcal{H}$ or $\tau^{-1}X\in \mathcal{H}[1]$. Suppose that $\tau^{-1}X\in \mathcal{H}[1]$, then $\tau^{-1}X[-1]\in \mathcal{H}$. We have $X[-1]=\tau(\tau^{-1}X[-1])\in \mathcal{H}$, a contradiction. In particular, $\mathcal{H}$ has no nonzero injective objects.
\end{proof}

We also have the following result from \cite[Lemma 3.7]{Happel98}, where the proof is valid for arbitrary field.
\begin{lemma}\label{l:Happel-tilting}
	Let $\mathcal{H}$ be a hereditary category with tilting object. Let $M\in \mathcal{H}$ be a rigid object such that $|M|=\opname{rank}G_0(\der^b(\mathcal{H}))$, then $M$ is a tilting object.
\end{lemma}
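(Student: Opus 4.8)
The plan is to prove that $\opname{thick}(M)=\der^b(\mathcal{H})$ and then to deduce the tilting property from this together with the fact that $\mathcal{H}$ is hereditary. Concretely, if we grant $\opname{thick}(M)=\der^b(\mathcal{H})$, then for any $X\in\mathcal{H}$ with $\Hom_{\mathcal{H}}(M,X)=0=\Ext^1_{\mathcal{H}}(M,X)$ it will remain to show $X=0$, and this is exactly the definition of $M$ being a tilting object (recall $M$ is rigid by hypothesis).

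First I would reduce $\opname{thick}(M)=\der^b(\mathcal{H})$ to a statement about exceptional sequences. Let $M_1,\dots,M_n$ be the pairwise non-isomorphic indecomposable summands of $M$, so that $n=|M|=\rank\go(\der^b(\mathcal{H}))$ and $\opname{thick}(M)=\opname{thick}(M_1,\dots,M_n)$. Since $M$ is rigid, each $M_i$ is exceptional by Lemma~\ref{l:HR-exceptional} (in particular $\End_{\mathcal{H}}(M_i)$ is a division algebra), and $\Ext^1_{\mathcal{H}}(M_i,M_j)=0$ for all $i,j$. By Lemma~\ref{l:no-cycle} the category $\add M$ has no cycles, so the directed graph on $\{1,\dots,n\}$ with an arrow $i\to j$ whenever $i\neq j$ and $\Hom_{\mathcal{H}}(M_i,M_j)\neq 0$ (every such morphism being a non-isomorphism) is acyclic; after relabelling I may therefore assume $\Hom_{\mathcal{H}}(M_i,M_j)=0$ whenever $i>j$. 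Then $(M_1,\dots,M_n)$ is a complete exceptional sequence in $\der^b(\mathcal{H})$, and I would invoke the standard fact that a maximal exceptional sequence generates the derived category as a thick subcategory, giving $\opname{thick}(M)=\der^b(\mathcal{H})$.

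To finish, let $X\in\mathcal{H}$ satisfy $\Hom_{\mathcal{H}}(M,X)=0=\Ext^1_{\mathcal{H}}(M,X)$. Since $\mathcal{H}$ is hereditary and $X\in\mathcal{H}$, we have $\Hom_{\der^b(\mathcal{H})}(M,X[i])=\Ext^i_{\mathcal{H}}(M,X)=0$ for every $i\in\Z$ (the cases $i\neq 0,1$ are automatic, and $i=0,1$ hold by assumption). The full subcategory of those $W\in\der^b(\mathcal{H})$ with $\Hom_{\der^b(\mathcal{H})}(W,X[i])=0$ for all $i$ is thick and contains $M$, hence equals $\opname{thick}(M)=\der^b(\mathcal{H})$; applying this with $W=X$ and $i=0$ gives $\id_X=0$, \ie $X=0$. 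Thus $M$ is a tilting object of $\mathcal{H}$.

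The main obstacle is the input that a length-$n$ exceptional sequence in $\der^b(\mathcal{H})$ is complete; everything else is formal. For $\mathcal{H}=\mod H$ this is classical (Crawley-Boevey, Ringel), and in general it follows from the perpendicular-category theory for hereditary abelian categories with a tilting object (Geigle--Lenzing, Happel--Reiten--Smal\o{}; see \cite{Happel98}): inductively, the right perpendicular category of $M_n$ in $\mathcal{H}$ is again a hereditary abelian category with a tilting object, one has a triangle equivalence $\der^b(\mathcal{H})/\opname{thick}(M_n)\simeq\der^b(M_n^{\perp})$, the rank drops from $n$ to $n-1$, and the images of $M_1,\dots,M_{n-1}$ form a complete exceptional sequence in $\der^b(M_n^{\perp})$. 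This iteration in fact proves the lemma directly: after $n$ steps the right perpendicular category of $M$ in $\mathcal{H}$ is a hereditary abelian category admitting a tilting object whose Grothendieck group has rank $n-n=0$, hence is the zero category.
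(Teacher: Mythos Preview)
The paper does not actually prove this lemma: it simply quotes \cite[Lemma~3.7]{Happel98} and remarks that the argument there goes through over an arbitrary field. Your proposal, by contrast, supplies a self-contained argument (modulo the standard input on perpendicular categories/exceptional sequences that you acknowledge), so there is nothing to compare at the level of strategy.

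Your argument is correct. The reduction of the tilting condition to $\opname{thick}(M)=\der^b(\mathcal{H})$ is clean, and the ordering of the $M_i$ into an exceptional sequence via Lemmas~\ref{l:HR-exceptional} and~\ref{l:no-cycle} is exactly right. The only substantive input is that a length-$n$ exceptional sequence in $\der^b(\mathcal{H})$ generates (equivalently, that successive perpendicular categories of exceptional objects stay within the class of hereditary abelian categories with a tilting object and drop the rank by one), and you correctly flag this and cite the appropriate sources. One caution if you plan to embed this in the present paper: do not appeal to Theorem~\ref{t:verdier-quotient} for that input, since its proof uses Proposition~\ref{p:rigid-partial}, which in turn uses the lemma you are proving; relying directly on \cite{Happel98} or the Geigle--Lenzing perpendicular-category formalism, as you do, avoids the circularity.
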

A rigid object $M\in \mathcal{H}$ is a {\it partial tilting} if there is an object $N\in \mathcal{H}$ such that $M\oplus N$ is a tilting object.  The following seems to be known for experts, which has been proved by Happel and Unger \cite[Proposition 3.1]{HU} over algebraically closed field. Here we sketch a proof for arbitrary fields by cluster-tilting theory and we refer to \cite{Buanetal} for unexplained terminology in cluster-tilting theory.
\begin{proposition}~\label{p:rigid-partial}
	Each rigid object of $\mathcal{H}$ is a partial tilting object.
\end{proposition}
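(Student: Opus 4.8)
The plan is to reduce to connected $\mathcal{H}$, treat the case $\mathcal{H}\simeq\mod H$ by Bongartz's completion, and handle the remaining cases inside the cluster category of $\mathcal{H}$, where every rigid object is a direct summand of a cluster-tilting object; such a completion is then pulled back to a tilting object of $\mathcal{H}$ by Lemma~\ref{l:Happel-tilting}. First I would reduce to $\mathcal{H}$ connected, the statement being additive over components: a rigid $M$ splits as $M=\bigoplus_i M_i$ with $M_i$ rigid in the $i$-th component $\mathcal{H}_i$, each $\mathcal{H}_i$ inherits a tilting object from $\mathcal{H}$, and a direct sum of tilting objects of the $\mathcal{H}_i$ is a tilting object of $\mathcal{H}$, so it is enough to complete each $M_i$ inside $\mathcal{H}_i$. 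If the connected $\mathcal{H}$ is equivalent to $\mod H$ for a finite-dimensional hereditary $k$-algebra $H$, then $M$ is a partial tilting module (projective dimension being automatically at most $1$) and Bongartz's lemma gives a tilting module with $M$ as a direct summand; this case also handles projective objects, which the cluster-category argument below does not see.

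So assume $\mathcal{H}$ connected and not equivalent to $\mod H$. By Proposition~\ref{p:no-proj-inj} the Auslander--Reiten translation $\tau$ is an autoequivalence of $\mathcal{H}$. Put $F:=\tauni[1]$ and let $\cc_{\mathcal{H}}:=\der^b(\mathcal{H})/\langle F\rangle$ be the cluster category, with projection $\pi\colon\der^b(\mathcal{H})\to\cc_{\mathcal{H}}$; since $\mathcal{H}$ has a tilting object, $\der^b(\mathcal{H})$ has a Serre functor and $\cc_{\mathcal{H}}$ is a triangulated, $2$-Calabi--Yau, Krull--Schmidt category (Keller). Because $\tau$ is an autoequivalence of $\mathcal{H}$, every indecomposable object of $\der^b(\mathcal{H})$ lies in the $F$-orbit of a unique indecomposable of $\mathcal{H}$, so $\pi$ restricts to a bijection from the indecomposables of $\mathcal{H}$ onto those of $\cc_{\mathcal{H}}$; in particular $|\pi X|=|X|$ for $X\in\mathcal{H}$. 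Moreover, for $T\in\mathcal{H}$ we have $\Hom_{\cc_{\mathcal{H}}}(\pi T,\pi T[1])=\bigoplus_{m\in\Z}\Hom_{\der^b(\mathcal{H})}(T,\tau^{-m}T[m+1])$, where, as $\mathcal{H}$ is hereditary and $\tau^{-m}T\in\mathcal{H}$, only $m=0$ and $m=-1$ survive, contributing $\Ext^1_{\mathcal{H}}(T,T)$ and, by Auslander--Reiten duality, its $k$-dual; hence $T$ is rigid in $\mathcal{H}$ if and only if $\pi T$ is rigid in $\cc_{\mathcal{H}}$. Finally $\pi$ sends tilting objects of $\mathcal{H}$ to cluster-tilting objects of $\cc_{\mathcal{H}}$, so $\cc_{\mathcal{H}}$ has a cluster-tilting object, and all cluster-tilting objects of $\cc_{\mathcal{H}}$ then have exactly $n:=\rank\go(\der^b(\mathcal{H}))$ indecomposable summands.

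Given this setup, take a rigid $M\in\mathcal{H}$. Then $\pi M$ is rigid in $\cc_{\mathcal{H}}$, hence a direct summand of some cluster-tilting object $\overline{T}$ of $\cc_{\mathcal{H}}$ by cluster-tilting theory. Lifting each indecomposable summand of $\overline{T}$ to its unique preimage in $\mathcal{H}$ produces $T\in\mathcal{H}$ with $\pi T\cong\overline{T}$. Then $M$ is a direct summand of $T$ (bijectivity of $\pi$ on indecomposables), $T$ is rigid in $\mathcal{H}$ (the equivalence of rigidity above), and $|T|=|\overline{T}|=n=\rank\go(\der^b(\mathcal{H}))$; so $T$ is a tilting object of $\mathcal{H}$ by Lemma~\ref{l:Happel-tilting}, and $M$ is a partial tilting object.

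The hard part is the single external input used above: that in the cluster category $\cc_{\mathcal{H}}$ every rigid object is a direct summand of a cluster-tilting object. For $\mod H$ this is classical (mutation of cluster-tilting objects together with connectedness of the cluster exchange graph, cf.~\cite{Buanetal}); for a general connected hereditary category with tilting object, $\cc_{\mathcal{H}}$ depends only on the triangulated category $\der^b(\mathcal{H})$, so one reduces to the weighted-projective-line case through the Happel--Reiten classification of $\der^b(\mathcal{H})$. One could instead try to avoid cluster categories and complete $M$ one indecomposable summand at a time, ordering its summands into an exceptional sequence by Lemma~\ref{l:no-cycle} and invoking the completion of exceptional sequences; but converting a complete exceptional sequence into a tilting object again seems to require the $2$-Calabi--Yau reduction, so the cluster-category route is the cleanest.
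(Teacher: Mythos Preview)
Your argument is essentially the paper's own proof: reduce to connected $\mathcal{H}$, dispose of $\mod H$ by Bongartz, and otherwise pass to the cluster category $\mathcal{C}(\mathcal{H})$, complete $\pi(M)$ to a cluster-tilting object, lift back to $\mathcal{H}$, and apply Lemma~\ref{l:Happel-tilting}. The one place the paper is sharper is the external input for the completion step: it cites \cite[Theorem~4.1 and Corollary~4.5]{AIR} directly for completing a rigid object to a cluster-tilting object in a $2$-Calabi--Yau category with cluster-tilting objects, whereas your appeal to ``connectedness of the cluster exchange graph'' is not the right tool (connectedness relates cluster-tilting objects to one another, it does not by itself show that an arbitrary rigid object embeds in one).
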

\begin{proof}
	Without loss of generality, we may assume that $\mathcal{H}$ is connected.
	Let $M$ be a rigid object of $\mathcal{H}$.
	If $\mathcal{H}=\mod H$ for a finite dimensional hereditary $k$-algebra $H$, it is well-known that $M$ is a partial tilting module by classical tilting theory. 
	
	Let us assume that $\mathcal{H}$ is not equivalent to $\mod H$ for a finite dimensional hereditary $k$-algebra $H$. By Proposition~\ref{p:no-proj-inj}, $\mathcal{H}$ has neither nonzero projective objects nor nonzero injective objects. Let $\mathcal{C}(\mathcal{H}):=\der^b(\mathcal{H})/ \tau^{-1}\circ [1]$ be the cluster category of $\mathcal{H}$, \ie  the orbit category of $\der^b(\mathcal{H})$ by the equivalent functor $\tau^{-1}\circ [1]$( {cf.} \cite{Keller}).  The cluster category $\mathcal{C}(\mathcal{H})$ admits a canonical triangle structure such that the projection functor $\pi: \der^b(\mathcal{H})\to \mathcal{C}(\mathcal{H})$ is a triangle functor. The projection functor $\pi$ induces a bijection between the set of isomorphism classes of objects of $\mathcal{H}$ and the set of isomorphism classes of objects of $\mathcal{C}(\mathcal{H})$. Moreover, an object $X\in \mathcal{H}$ is rigid if and only if $\pi(X)$ is rigid in $\mathcal{C}(\mathcal{H})$ (cf. \cite{Buanetal, Zhu}). 
	
	Let $T$ be a tilting object of $\mathcal{H}$, then $\pi(T)$ is a cluster-tilting object of $\mathcal{C}(\mathcal{H})$ ({cf.} \cite{Amiot}).  Since $M$ is rigid,  $\pi(M)$ is rigid, which can be completed to a cluster-tilting object of $\mathcal{C}(\mathcal{H})$ by \cite[Theorem 4.1]{AIR}, say $N\in \mathcal{H}$ such that $\pi(M)\oplus \pi(N)$ is a cluster-tilting object of $\mathcal{C}(\mathcal{H})$. As a consequence, $M\oplus N$ is rigid. According to \cite[Corollary 4.5]{AIR}, $|M\oplus N|=|\pi(M)\oplus \pi(N)|=|T|$. We conclude that $M\oplus N$ is a tilting object of $\mathcal{H}$ by Lemma~\ref{l:Happel-tilting}.
\end{proof}

\subsection{Perpendicular category and localization}

Let $\mathcal{H}$ be a hereditary abelian category and $E\in \mathcal{H}$ an exceptional object.
Define \[E^\perp:=\{X\in \mathcal{H}~|~\Hom_\mathcal{H}(E,X)=0=\Ext^1_{\mathcal{H}}(E,X)\}\]
and 
 \[\!^\perp E:=\{X\in \mathcal{H}~|~\Hom_\mathcal{H}(X,E)=0=\Ext^1_{\mathcal{H}}(X,E)\}.\] It is straightforward to check that $E^\perp$ and $^\perp E$ are  hereditary abelian subcategories of $\mathcal{H}$. 
 \begin{lemma}\label{l:prep-cat}
 Let $\mathcal{H}$ be a hereditary abelian category and $E\in \mathcal{H}$ an exceptional object. 
 We have $\der^b(E^\perp)\cong \der^b(\mathcal{H})/\opname{thick}(E) $ and $\der^b(^\perp E)\cong\der^b(\mathcal{H})/\opname{thick}(E)$.  
 \end{lemma}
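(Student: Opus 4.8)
The plan is to exhibit the perpendicular category $E^\perp$ inside $\der^b(\mathcal{H})$ as the right perpendicular of the thick subcategory $\opname{thick}(E)$, and then invoke Lemma~\ref{l:perp-quotient}. More precisely, $\opname{thick}(E)$ is the thick subcategory of $\der^b(\mathcal{H})$ generated by the exceptional object $E$; since $\End(E)$ is a division algebra (Lemma~\ref{l:HR-exceptional}) and $\Hom(E,E[i])=0$ for $i\neq 0$ (as $E$ is rigid and $\mathcal{H}$ is hereditary, so there are no $\Ext^{\geq 2}$), the object $E$ is a tilting object of $\opname{thick}(E)$ and in fact $\opname{thick}(E)\simeq \der^b(\mod \End(E))\simeq \der^b(\opname{vect}_D)$ for the division ring $D=\End(E)$. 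In particular $\opname{thick}(E)=\bigcup_{i\in\Z}\add E[i]$, so an object $X$ lies in $\opname{thick}(E)^\perp$ iff $\Hom_{\der^b(\mathcal{H})}(E[i],X)=0$ for all $i\in\Z$, i.e. iff $\Hom_{\der^b(\mathcal{H})}(E,X[j])=0$ for all $j$.

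First I would check that $\opname{thick}(E)$ is functorially finite in $\der^b(\mathcal{H})$, or at least contravariantly finite, so that Lemma~\ref{l:perp-quotient}(1) applies and $\mathbb{L}\circ\iota: \opname{thick}(E)^\perp \to \der^b(\mathcal{H})/\opname{thick}(E)$ is a triangle equivalence. Contravariant finiteness follows because $E$ is exceptional: for any $X\in\der^b(\mathcal{H})$ the spaces $\Hom(E,X[j])$ are finite-dimensional and vanish for all but finitely many $j$ (bounded derived category of a hereditary category), so one can build a right $\opname{thick}(E)$-approximation of $X$ by finitely many universal extensions, exactly as in the classical Geigle--Lenzing construction. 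Second, I would identify $\opname{thick}(E)^\perp$ with $\der^b(E^\perp)$ as triangulated categories. The key input is the Geigle--Lenzing theorem (``perpendicular categories'', \cite{GeigleLenzing}): for an exceptional object $E$ in a hereditary category $\mathcal{H}$, the inclusion $E^\perp \hookrightarrow \mathcal{H}$ is exact and induces a fully faithful triangle functor $\der^b(E^\perp)\hookrightarrow \der^b(\mathcal{H})$ whose essential image is precisely $\{X : \Hom_{\der^b(\mathcal{H})}(E,X[j])=0 \text{ for all }j\}=\opname{thick}(E)^\perp$. Concretely, an object of $\der^b(\mathcal{H})$ has a well-defined cohomology in $\mathcal{H}$, and lying in $\opname{thick}(E)^\perp$ is equivalent to all cohomologies lying in $E^\perp$ (using that $E^\perp$ is closed under subobjects, quotients and extensions in $\mathcal{H}$, which is where heredity is used); this gives the essential surjectivity, and full faithfulness is immediate since $E^\perp\hookrightarrow\mathcal{H}$ is exact and $E^\perp$ is hereditary. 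Composing the two equivalences yields $\der^b(E^\perp)\simeq \der^b(\mathcal{H})/\opname{thick}(E)$. The statement for $^\perp E$ is entirely dual, using Lemma~\ref{l:perp-quotient}(2) and the covariant finiteness of $\opname{thick}(E)$.

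The main obstacle I expect is the essential-surjectivity half of the identification $\der^b(E^\perp)\xrightarrow{\sim}\opname{thick}(E)^\perp$: one must show that every object $X\in\der^b(\mathcal{H})$ with $\Hom(E,X[j])=0$ for all $j$ is quasi-isomorphic to a complex with terms (equivalently, cohomology) in $E^\perp$. For a single object $X\in\mathcal{H}$ this is the content of the Geigle--Lenzing reflection: there is a functorial exact sequence built from the universal extension by copies of $E$ and $E$'s syzygy realizing the reflection into $E^\perp$, and the vanishing hypothesis forces $X$ itself to already lie in $E^\perp$ up to this adjustment; one then propagates through the (bounded) cohomology of a general complex by dévissage on the length of the complex, using that $E^\perp$ is a Serre subcategory-like (exact, extension-closed, sub/quotient-closed) subcategory so that the triangles splicing the cohomology stay inside $\opname{thick}(E)^\perp$. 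Heredity is essential here to control $\Ext$'s and to guarantee $E^\perp$ is again hereditary. If one prefers to avoid re-proving Geigle--Lenzing, the cleanest route is to cite \cite{GeigleLenzing} for the abelian-level statement and only do the short derived-category bookkeeping, which is what I would ultimately write up.
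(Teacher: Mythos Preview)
Your approach is essentially the paper's: describe $\opname{thick}(E)$ as $\bigcup_i\add E[i]$, note it is functorially finite, apply Lemma~\ref{l:perp-quotient} to get $\opname{thick}(E)^\perp\simeq\der^b(\mathcal{H})/\opname{thick}(E)$, and then identify $\opname{thick}(E)^\perp$ with the essential image of $\der^b(E^\perp)\hookrightarrow\der^b(\mathcal{H})$; the paper records exactly this, calling the last identification ``straightforward'' rather than invoking Geigle--Lenzing. One small correction: $E^\perp$ is \emph{not} in general closed under subobjects or quotients in $\mathcal{H}$ (only under extensions, kernels of epis and cokernels of monos), but you do not actually need that claim---the essential surjectivity follows immediately from the hereditary splitting $X\cong\bigoplus_i H^i(X)[-i]$ in $\der^b(\mathcal{H})$, which reduces the question to objects of $\mathcal{H}$.
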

 \begin{proof}
 Since $E$ is exceptional, according to Lemma~\ref{l:HR-exceptional}, the indecomposable objects of $\opname{thick}(E)$ are precisely $E[i], i\in \Z$. 
It follows that $\opname{thick}(E)$ is a functorially finite subcategory of $\der^b(\mathcal{H})$. By Lemma \ref{l:perp-quotient}, we have $\opname{thick}(E)^\perp\cong \der^b(\mathcal{H})/\opname{thick}(E)$. Consider the inclusion functor $\iota: E^\perp \hookrightarrow \mathcal{H}$, which induces a fully faithful triangle functor $\hat{\iota}: \der^b(E^\perp)\hookrightarrow \der^b(\mathcal{H})$. It is straightforward to check that the image of $\hat{\iota}$ coincides with $\opname{thick}(E)^\perp$. Consequently, $\der^b(E^\perp)\cong \opname{thick}(E)^\perp \cong \der^b(\mathcal{H})/\opname{thick}(E)$. Similarly, one can prove $\der^b(\!^\perp E)\cong \der^b(\mathcal{H})/\opname{thick}(E)$.
 
 \end{proof}

\begin{lemma}\label{l:silting-tilting}
Let $\mathcal{H}$ be a hereditary abelian category and $E\in \mathcal{H}$ an exceptional object. Assume that $E^\perp$ has a tilting object $M$. Then $M\oplus E[1]$ is a silting object of $\der^b(\mathcal{H})$. Moreover, the right mutation of $M\oplus E[1]$ with respect to $E[1]$ is a tilting object of $\mathcal{H}$.
\end{lemma}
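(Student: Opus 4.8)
The plan is to establish the two assertions in turn: the first by showing that $M\oplus E[1]$ is a presilting object that generates $\der^b(\mathcal{H})$, and the second by analysing the corresponding mutation triangle.

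For the presilting property one checks, for $i>0$ and $X,Y\in\{M,E[1]\}$, that $\Hom_{\der^b(\mathcal{H})}(X,Y[i])=0$. By Lemma~\ref{l:prep-cat} the natural functor $\der^b(E^\perp)\hookrightarrow\der^b(\mathcal{H})$ is fully faithful with essential image $\opname{thick}(E)^\perp$, so $\Hom_{\der^b(\mathcal{H})}(M,M[i])=\Hom_{\der^b(E^\perp)}(M,M[i])=0$ for $i>0$ because $M$ is a tilting (hence rigid) object of the hereditary category $E^\perp$; the remaining terms $\Hom(M,E[1+i])=\Ext^{1+i}_{\mathcal{H}}(M,E)$, $\Hom(E[1],M[i])=\Ext^{i-1}_{\mathcal{H}}(E,M)$ and $\Hom(E[1],E[1+i])=\Ext^i_{\mathcal{H}}(E,E)$ vanish for $i>0$ since $M\in E^\perp$, $E$ is rigid, and $\mathcal{H}$ is hereditary. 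For generation, $\opname{thick}(E)$ is functorially finite in $\der^b(\mathcal{H})$ (as noted in the proof of Lemma~\ref{l:prep-cat}) and extension-closed, so Wakamatsu's Lemma~\ref{l:wakamatsu-lemma} gives a torsion pair $(\opname{thick}(E),\opname{thick}(E)^\perp)$; consequently any thick subcategory containing $\opname{thick}(E)$ and $\opname{thick}(E)^\perp$ equals $\der^b(\mathcal{H})$. Since $\opname{thick}(E[1])=\opname{thick}(E)$, and since $M$, being a tilting object of $E^\perp$, generates $\der^b(E^\perp)$, whose essential image is $\opname{thick}(E)^\perp$, we get $\opname{thick}(M\oplus E[1])=\der^b(\mathcal{H})$. (Alternatively, $M\oplus E[1]$ is precisely the lift of the silting object $M$ of $\der^b(\mathcal{H})/\opname{thick}(E)\cong\der^b(E^\perp)$ under the inverse of the bijection in Theorem~\ref{t:silting-reduction} for $\mathcal{S}=\opname{thick}(E)$ and $D=E[1]\in\opname{silt}\mathcal{S}$: the minimal left $\mathcal{S}_D^{<0}$-approximation of $M$ occurring there is zero, because $\mathcal{S}_D^{<0}$ is generated by the $E[j]$ with $j\geq 2$ and $\Hom_{\der^b(\mathcal{H})}(M,E[j])=\Ext^j_{\mathcal{H}}(M,E)=0$ for $j\geq 2$.)

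For the second assertion, write the right mutation of $M\oplus E[1]$ at $E[1]$ as $N'\oplus M$, where $N'$ fits in a triangle $N'\to A\xrightarrow{f}E[1]\to N'[1]$ with $f$ a minimal right $\add M$-approximation of $E[1]$ and $A\in\add M$; by \cite[Theorem~2.31]{AiharaIyama}, $N'\oplus M$ is again a silting object of $\der^b(\mathcal{H})$. The decisive point is that $N'$ lies in $\mathcal{H}$: as $A\in\add M\subseteq\mathcal{H}$ is concentrated in degree $0$ and $E[1]$ in degree $-1$, the long exact cohomology sequence of this triangle for the standard $t$-structure on $\der^b(\mathcal{H})$ forces $H^i(N')=0$ for $i\neq 0$ and yields a short exact sequence $0\to E\to H^0(N')\to A\to 0$ in $\mathcal{H}$, so $N'\cong H^0(N')\in\mathcal{H}$. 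Thus $N'\oplus M$ is a silting object of $\der^b(\mathcal{H})$ lying in $\mathcal{H}$. Being presilting, it satisfies $\Ext^1_{\mathcal{H}}(N'\oplus M,N'\oplus M)=\Hom_{\der^b(\mathcal{H})}(N'\oplus M,(N'\oplus M)[1])=0$, so it is rigid in $\mathcal{H}$; and if $X\in\mathcal{H}$ satisfies $\Hom_{\mathcal{H}}(N'\oplus M,X)=0=\Ext^1_{\mathcal{H}}(N'\oplus M,X)$, then $\Hom_{\der^b(\mathcal{H})}(N'\oplus M,X[i])=0$ for every $i$ by heredity, and since the full subcategory consisting of the $Z\in\der^b(\mathcal{H})$ with $\Hom_{\der^b(\mathcal{H})}(Z,X[i])=0$ for all $i$ is thick and contains $N'\oplus M$, it coincides with $\der^b(\mathcal{H})$, whence $X=0$. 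By the definition of tilting object, $N'\oplus M$ is a tilting object of $\mathcal{H}$.

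The step I expect to be the main obstacle is verifying that the mutated summand $N'$ genuinely lies in $\mathcal{H}$ rather than only in $\der^b(\mathcal{H})$; once this is in hand, everything else follows formally from the definitions of presilting and silting object together with the heredity of $\mathcal{H}$. I emphasize that this argument nowhere presupposes that $\mathcal{H}$ itself admits a tilting object, which is essential since this lemma is exactly what is used to produce one.
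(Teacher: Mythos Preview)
Your proof is correct and follows essentially the same approach as the paper: the presilting check is identical, generation is argued via the torsion pair $(\opname{thick}(E),\opname{thick}(E)^\perp)$ coming from Wakamatsu's lemma together with $\opname{thick}(M)=\opname{thick}(E)^\perp$, and the tilting property of the mutation is deduced exactly as in the paper once $N'\in\mathcal{H}$ is known. The only cosmetic difference is that the paper obtains $N'\in\mathcal{H}$ by noting that $\mathcal{H}$ is extension-closed in $\der^b(\mathcal{H})$ and reading the triangle as an extension $E\to N'\to A$, whereas you phrase the same fact via the long exact cohomology sequence.
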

\begin{proof}
Note that $M\in E^\perp$ is a tilting object and $\mathcal{H}$ is hereditary, we clearly have
\[\Hom_{\der^b(\mathcal{H})}(M\oplus E[1], M[i]\oplus E[i+1])=0\ \text{for all $i>0$.}
\]
Recall that $\opname{thick}(E)$ is a functorially finite subcategory of $\der^b(\mathcal{H})$.  For any $X\in \der^b(\mathcal{H})$, consider the following triangle
\[E_X\xrightarrow{f_X}X\to Z\to E_X[1],
\]
where $f_X$ is a minimal right $\opname{thick}(E)$-approximation of $X$. It follows that $Z\in \opname{thick}(E)^\perp$ by  Lemma \ref{l:wakamatsu-lemma}.  
As in the proof of Lemma \ref{l:prep-cat}, we may identify $\der^b(E^\perp)$ with $\opname{thick}(E)^\perp$.  By the assumption that $M$ is a tilting object of $E^\perp$, we conclude that $\opname{thick}(M)=\opname{thick}(E)^\perp$. Consequently, $X\in \opname{thick}(M\oplus E[1])$. Hence $\opname{thick}(M\oplus E[1])=\der^b(\mathcal{H})$. In particular, we have proved that $M\oplus E[1]$ is a silting object of $\der^b(\mathcal{H})$.

Let $M'\xrightarrow{f_{E[1]}} E[1]$ be a minimal right $\add M$-approximation of $E[1]$, which fits into the following triangle
\[
E\to N\to M'\xrightarrow{f_{E[1]}} E[1].
\]
It follows from \cite[Theorem 2.31]{AiharaIyama} that $M\oplus N$ is a silting object of $\der^b(\mathcal{H})$. Note that $\mathcal{H}$ is an extension-closed subcategory of $\der^b(\mathcal{H})$, we conclude that $N\in \mathcal{H}$. It is straightforward to show that $N\oplus M$ is a tilting object of $\mathcal{H}$. Indeed, since $N\oplus M$ is a silting object, we have $\Ext^1_{\mathcal{H}}(N\oplus M, N\oplus M)=0$ and $\opname{thick}(N\oplus M)=\der^b(\mathcal{H})$. Let $L\in \mathcal{H}$ such that $\Hom_\mathcal{H}(N\oplus M, L)=0=\Ext^1_\mathcal{H}(N\oplus M, L)$.  We clearly have $\Hom_{\der^b(\mathcal{H})}(N\oplus M, L[i])=0$ for all $i\in \mathbb{Z}$.
Consequently, for any object $Y\in \opname{thick}(N\oplus M)=\der^b(\mathcal{H})$, we have $\Hom_{\der^b(\mathcal{H})}(Y,L)=0$. In particular, $\Hom_{\mathcal{H}}(L,L)=0$, which implies that $L=0$. This completes the proof.
\end{proof}

 The following result plays a fundamental role in our reduction approach to silting objects in $\der^b(\mathcal{H})$.
\begin{theorem}\label{t:verdier-quotient}
Let $\mathcal{H}$ be a hereditary abelian category with tilting objects and  $E\in \mathcal{H}$ an exceptional object. 
There is a hereditary abelian category $\mathcal{H}'$ with tilting objects such that $\der^b(\mathcal{H}')$ is triangle equivalent to $\der^b(\mathcal{H})/\opname{thick}(E)$. Moreover, $\opname{rank} G_0(\der^b(\mathcal{H}))=\opname{rank} G_0(\der^b(\mathcal{H}'))+1$.
\end{theorem}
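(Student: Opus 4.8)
The plan is to take the exceptional object $E$ and pass to a suitable perpendicular category. By Lemma~\ref{l:prep-cat} we already know that both $\der^b(E^\perp)$ and $\der^b(\!^\perp E)$ are triangle equivalent to the Verdier quotient $\der^b(\mathcal{H})/\opname{thick}(E)$, and that $E^\perp$ and $\!^\perp E$ are hereditary abelian subcategories of $\mathcal{H}$. So the only remaining content of the theorem is: (a) at least one of $E^\perp$, $\!^\perp E$ has a tilting object, and (b) the rank count $\opname{rank} G_0(\der^b(\mathcal{H})) = \opname{rank} G_0(\der^b(E^\perp)) + 1$ (or the same with $\!^\perp E$). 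I would take $\mathcal{H}' := E^\perp$, and the whole theorem reduces to producing a tilting object in $E^\perp$.

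First I would produce a tilting object of $E^\perp$. By Proposition~\ref{p:rigid-partial}, the rigid object $E\in\mathcal{H}$ is a partial tilting object: there is $N\in\mathcal{H}$ with $T:=E\oplus N$ a tilting object of $\mathcal{H}$. I claim $N$ can be modified to land in $E^\perp$. The standard Bongartz-type argument works here: since $\mathcal{H}$ is hereditary and $E$ is exceptional, for each indecomposable summand of $N$ one forms the universal extension/coextension killing $\Hom_{\mathcal{H}}(E,-)$ and $\Ext^1_{\mathcal{H}}(E,-)$, producing an object $\tilde N\in E^\perp$ that is still rigid and satisfies $\add(E\oplus\tilde N)$ generates the same thick subcategory as $\add(E\oplus N)$ in $\der^b(\mathcal{H})$. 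Alternatively — and this is cleaner to cite — one uses the fact (essentially Happel–Rickard–Schofield / Geigle–Lenzing perpendicular calculus for hereditary categories) that the localization functor $\der^b(\mathcal{H})\to\der^b(\mathcal{H})/\opname{thick}(E)\cong\der^b(E^\perp)$ sends the tilting object $T=E\oplus N$ to a tilting object of $\der^b(E^\perp)$, whose image in $\mathcal{H}'=E^\perp$ (via the heart) is the desired tilting object $M$ of $E^\perp$. Either way, $E^\perp$ has a tilting object, so $\mathcal{H}' = E^\perp$ is a hereditary abelian category with tilting objects and $\der^b(\mathcal{H}')\cong\der^b(\mathcal{H})/\opname{thick}(E)$.

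For the rank statement I would argue via the silting object constructed in Lemma~\ref{l:silting-tilting}. Given the tilting object $M$ of $E^\perp$ just produced, that lemma says $M\oplus E[1]$ is a silting object of $\der^b(\mathcal{H})$. By \cite[Theorem 2.27]{AiharaIyama} (recalled in Section~\ref{s:silting-theory}), the number of indecomposable summands of any silting object equals $\opname{rank} G_0(\der^b(\mathcal{H}))$, so $\opname{rank} G_0(\der^b(\mathcal{H})) = |M\oplus E[1]| = |M| + 1$, since $E[1]\notin\add M$ (as $M\in\opname{thick}(E)^\perp$ while $E[1]\notin\opname{thick}(E)^\perp$). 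On the other hand $M$ is a tilting object of the hereditary category $E^\perp=\mathcal{H}'$, so by the discussion preceding Lemma~\ref{l:Happel-tilting} we have $|M| = \opname{rank} G_0(\der^b(\mathcal{H}'))$. Combining, $\opname{rank} G_0(\der^b(\mathcal{H})) = \opname{rank} G_0(\der^b(\mathcal{H}')) + 1$, as desired.

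The main obstacle is step two: showing that $E^\perp$ (or $\!^\perp E$) genuinely has a \emph{tilting object in the heart} $E^\perp$, not merely a tilting object in its derived category. The subtlety is that one must check the rigid object obtained by perpendicular reduction actually lies in the abelian subcategory $E^\perp\subset\mathcal{H}$ and is a tilting object there in the sense of the definition (generating condition against $\Hom$ and $\Ext^1$ inside $E^\perp$). For $\mathcal{H}=\mod H$ this is classical Bongartz completion inside $E^\perp$; for the general hereditary case one wants to invoke the perpendicular-category machinery, and I would need to be careful that the equivalence $\der^b(E^\perp)\cong\der^b(\mathcal{H})/\opname{thick}(E)$ of Lemma~\ref{l:prep-cat} is compatible with the standard $t$-structures so that a tilting object of $\der^b(\mathcal{H})$ containing $E$ as a summand projects to an object concentrated in degree zero of $E^\perp$. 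Once that compatibility is in hand, Lemma~\ref{l:Happel-tilting} finishes the identification, and the rest is bookkeeping.
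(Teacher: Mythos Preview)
Your overall structure is right and your rank argument via Lemma~\ref{l:silting-tilting} is exactly what the paper does. The difference lies in the one step you correctly flag as the obstacle: showing $E^\perp$ has a tilting object.

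The paper does not attempt a direct Bongartz-type or perpendicular-calculus argument in the abstract hereditary setting. Instead it invokes the Happel--Reiten classification \cite[Theorem~3.5]{HR}: since we are working up to derived equivalence, one may assume $\mathcal{H}=\mod H$ or $\mathcal{H}=\opname{coh}\mathbb{X}$. In the $\opname{coh}\mathbb{X}$ case, after completing $E$ to a tilting object via Proposition~\ref{p:rigid-partial}, the paper cites \cite[Proposition~1.4]{HR} to conclude that $E^\perp$ is a hereditary abelian category with a tilting object. In the $\mod H$ case it is classical (one may also assume $E$ is not projective, so $E^\perp\cong\mod H'$ for some hereditary $H'$). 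So the paper trades your abstract argument for a reduction to two concrete cases where the result is already in the literature.

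Your direct approach is plausible but, as you yourself note, incomplete at the crucial point. In particular your ``alternative'' via the localization functor does not work as stated: Theorem~\ref{t:silting-reduction} only tells you $\mathbb{L}(T)$ is \emph{silting} in $\der^b(E^\perp)$, not tilting, and there is no a~priori reason its image under the equivalence $\opname{thick}(E)^\perp\cong\der^b(E^\perp)$ lands in degree zero (the projection of $N\in\mathcal{H}$ onto $\opname{thick}(E)^\perp$ need not lie in $E^\perp$). The Bongartz-type modification you sketch first is closer to what actually works, but making it precise in the non-module case is essentially what \cite[Proposition~1.4]{HR} does; the paper simply cites it rather than reproving it.
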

\begin{proof}
Without loss of generality, we may assume that $\mathcal{H}$ is connected. By \cite[Theorem 3.5]{HR}, $\mathcal{H}$ is either derived equivalent to the category $\mod H$ of finitely generated modules over a finite dimensional hereditary $k$-algebra $H$ or the category $\opname{coh}\mathbb{X}$ of coherent sheaves over an exceptional curve $\mathbb{X}$ in the sense of Lenzing~\cite{Lenzing}. Since we are working with derived categories, we may further assume that $\mathcal{H}=\mod H$ or $\mathcal{H}=\opname{coh}\mathbb{X}$.

Let us consider the case $\mathcal{H}=\opname{coh}\mathbb{X}$. By Proposition \ref{p:rigid-partial}, there is an object $M\in \mathcal{H}$ such that $X\oplus M$ is a tilting object of $\mathcal{H}$. Applying \cite[Proposition 1.4]{HR}, we conclude that $\mathcal{H}':=E^\perp$ is a connected  hereditary abelian category with tilting object.  By Lemma \ref{l:prep-cat}, we obtain that $\der^b(\mathcal{H}')\cong \der^b(\mathcal{H})/\opname{thick}(E)$.
Let $N$ be a basic tilting object of $\mathcal{H}'=E^\perp$, it follows from Lemma \ref{l:silting-tilting} that $N\oplus E[1]$ is a basic silting object of $\der^b(\mathcal{H})$. Consequently,
$\opname{rank} G_0(\der^b(\mathcal{H}))=\opname{rank} G_0(\der^b(\mathcal{H}'))+1$.

Now we turn to the case $\mathcal{H}=\mod H$. Without loss of generality, we may assume that $E$ is not projective. There is an $H$-module $M$ such that $E\oplus M$ is a tilting object of $\mathcal{H}$ and $M$ is a tilting object of $E^\perp$.
In particular, $E^\perp$ is a hereditary abelian category with tilting object $M$ and the result follows from Lemma \ref{l:prep-cat}.

\end{proof}

\section{Proofs of the main results}\label{s:proofs}

\subsection{Proof of Theorem \ref{t:main-thm-1}}

$(1)\Rightarrow (2)\Rightarrow (3)$ are obviously.

$(3)\Rightarrow (1)$: Since $\der^b(\mathcal{H})$ admits silting objects, the Grothendieck group $\go(\der^b(\mathcal{H}))$ is a free abelian group of finite rank. We prove the statement by induction on $n:=\opname{rank} \go(\der^b(\mathcal{H}))$. This is clearly for the case $n=1$.
Now assume that this is true for $n<n_0$. Let $\mathcal{H}$ be a hereditary abelian category with  $\opname{rank}\go(\der^b(\mathcal{H}))=n_0$.
 Let $T=E\oplus \overline{T}$ be a basic silting object of $\der^b(\mathcal{H})$, where $E$ is an indecomposable direct summand. Without loss of generality, we may assume that $E\in \mathcal{H}$. In particular, $E$ is an exceptional object. 
 By Lemma \ref{l:prep-cat}, we have $\der^b(\mathcal{H})/\opname{thick}(E)\cong \der^b(E^\perp)$. Clearly, $E$ is a silting object of $\opname{thick}(E)$ and $\opname{thick}(E)$ is functorially finite in $\der^b(\mathcal{H})$. Denote by $\mathbb{L}:\der^b(\mathcal{H})\to \der^b(\mathcal{H})/\opname{thick}(E)$ the localization functor.
 By Theorem \ref{t:silting-reduction}, $\mathbb{L}(\overline{T})$ is a silting object of $\der^b(\mathcal{H})/\opname{thick}(E)\cong\der^b(E^\perp)$. Consequently, $\opname{rank} \go(\der^b(E^\perp))=n_0-1$. By induction, $E^\perp$ has a tilting object. We conclude that  $\mathcal{H}$ has a tilting object by Lemma \ref{l:silting-tilting}.
 
 $(1)\Rightarrow (4)$: Let $T$ be a basic tilting object of $\mathcal{H}$. Denote by $A=\End_\mathcal{H}(T)$ the endomorphism algebra of $T$. Let $\mod A$ be the category of finitely generated right $A$-modules.  Denote by $S_1,\ldots, S_n$ the pairwise non-isomorphic simple $A$-modules.
 We have an equivalence of triangulated categories $\mathbb{F}: \der^b(\mod A)\to \der^b(\mathcal{H})$.
 Clearly, $\mathbb{F}(S_1),\ldots, \mathbb{F}(S_n)$ is a SMC of $\der^b(\mathcal{H})$. Moreover, its $\opname{Ext}$-quiver is acyclic by Lemma~\ref{l:no-cycle}.
 
 $(4)\Rightarrow (1)$:  Since $\der^b(\mathcal{H})$ admits a SMC, the Grothendieck group $\go(\der^b(\mathcal{H}))$ is a free abelian group of finite rank. We prove the statement by induction on $n:=\opname{rank} \go(\der^b(\mathcal{H}))$.
 This is clearly true for $n=1$. Assume that this is true for $n<n_0$. Let $\mathcal{H}$ be a hereditary abelian category with $\opname{rank} \go(\der^b(\mathcal{H}))=n_0$ and $\mathcal{X}=\{X_1,\ldots, X_{n_0}\}$ a SMC of $\der^b(\mathcal{H})$ with acyclic $\opname{Ext}$-quiver.  Since the $\opname{Ext}$-quiver of $\mathcal{X}$ is acyclic, we deduce that  $X_1,\ldots, X_{n_0}$ are exceptional. Furthermore, we may renumerate $X_i$ to assume that 
 \[\Hom_{\der^b(\mathcal{H})}(X_i, X_j[1])=0 ~\text{whenever $i>j$.}
 \]
 Without loss of generality, we may assume that $X_1\in \mathcal{H}$.
Denote by $\mathcal{H}_{X_{1}}$ the smallest extension-closed subcategory of $\der^b(\mathcal{H})$ containing $X_1$. Since $X_1$ is exceptional, we have $\mathcal{H}_{X_1}=\add X_1$. Consequently, $\mathcal{H}_{X_1}$ satisfies the conditions of Theorem \ref{t:smc-reduction}. Let $\mathcal{Z}:=X_1[\geq 0]^\perp\cap \!^\perp X_1[\leq 0]$. Since $\mathcal{X}$ is a SMC, we deduce that $X_2,\ldots, X_{n_0}\in \mathcal{Z}$.  
By Theorem \ref{t:smc-reduction}, $\{X_2, \ldots, X_{n_0}\}$ is a SMC of $\mathcal{Z}\cong \der^b(\mathcal{H})/\opname{thick}(X_1)$. According to Lemma \ref{l:prep-cat}, $\der^b(\mathcal{H})/\opname{thick}(X_1)\cong \der^b(X_1^{\perp})$. Hence $\opname{rank} \go(\der^b(X_1^\perp))=n_0-1$.

Recall that $\langle 1\rangle$ is the suspension functor of $\mathcal{Z}$. For every $X_i$, $i=2, \ldots, n_0$, we have a triangle in $\der^b(\mathcal{H})$
\[R_{X_i}\xrightarrow{f_i} X_i[1]\to X_i\langle 1\rangle \to R_{X_i}[1],
\]
where $f_i$ is a minimal right $\add X_1$-approximation of $X_i[1]$. Applying $\Hom_{\der^b(\mathcal{H})}(X_j, -)$ to the above triangle, we obtain 
\[\cdots\to \Hom_{\der^b(\mathcal{H})}(X_j,X_i[1])\to \Hom_{\der^b(\mathcal{H})}(X_j,X_i\langle 1\rangle)\to \Hom_{\der^b(\mathcal{H})}(X_j,R_{X_i}[1])\to \cdots.
\]
Consequently, for any  $1<i\leq j$, we obtain $\Hom_{\mathcal{Z}}(X_j, X_{i}\langle 1\rangle)=\Hom_{\der^b(\mathcal{}H)}(X_j, X_i\langle 1\rangle)=0$. In particular, the $\opname{Ext}$-quiver of the SMC $\{X_2, \ldots, X_{n_0}\}$ of $\mathcal{Z}\cong \der^b(\mathcal{H})/\opname{thick}(X_1)\cong \der^b(X_1^\perp)$ is acyclic.
 By induction, $X_1^\perp$ has a tilting object. We conclude that $\mathcal{H}$ has a tilting object by Lemma \ref{l:silting-tilting}.

\subsection{Proof of Theorem \ref{t:presilting-partial}}

We prove this statement by induction on the rank $n:=\opname{rank}\go(\der^b(\mathcal{H}))$ of the Grothendieck group $\go(\der^b(\mathcal{H}))$. This is clear for $n=1$. Assume that this is true for $n<n_0$. Let $\mathcal{H}$ be a hereditary abelian category with tilting objects such that $\opname{rank}\go(\der^b(\mathcal{H}))=n_0$. Let $T=T_1\oplus T_2\oplus \cdots\oplus T_r$ be a basic presilting object of $\der^b(\mathcal{H})$ with indecomposable direct summands $T_1,\ldots, T_r$. We may assume that $T_i\in \mathcal{H}[t_i]$ such that 
\begin{enumerate}
\item[$\circ$] $t_1\leq t_2\leq \cdots\leq t_r=0$;
\item[$\circ$] $\Hom_{\der^b(\mathcal{H})}(T_i,T_j)=0$ whenever $i>j$ by Lemma~\ref{l:no-cycle}.
\end{enumerate}
Let us rewrite $\overline{T}:=T_1\oplus \cdots\oplus T_{r-1}$. We clearly have $\Hom_{\der^b(\mathcal{H})}(T_r[i], \overline{T})=0$ for all $i\in \Z$. In particular, $\overline{T}\in \opname{thick}(T_r)^\perp$. Recall that we have an equivalence $\opname{thick}(T_r)^\perp\cong \der^b(\mathcal{H})/\opname{thick}(T_r)$ by Lemma \ref{l:prep-cat}. As a consequence, $\overline{T}$ is a presilting of $\der^b(\mathcal{H})/\opname{thick}(T_r)$. On the other hand, by Theorem~\ref{t:verdier-quotient}, $\der^b(\mathcal{H})/\opname{thick}(T_r)$ is triangle equivalent to $\der^b(\mathcal{H}')$ for a hereditary abelian category $\mathcal{H}'$ with tilting objects such that $\opname{rank}\go(\der^b(\mathcal{H}'))=n_0-1$. By induction, every presilting object of $\der^b(\mathcal{H}')$ is a partial silting object. Consequently, there is an object $N\in \opname{thick}(T_r)^\perp$ such that $\overline{T}\oplus N$ is a silting object of $\opname{thick}(T_r)^\perp\cong \der^b(\mathcal{H})/\opname{thick}(T_r)$. By Theorem \ref{t:silting-reduction}, there is a silting object $M$ of $\mathcal{T}$ such that $T_r\in \add M$ and $\mathbb{L}(M)=\mathbb{L}(\overline{T}\oplus N)$, where $\mathbb{L}:\der^b(\mathcal{H})\to \der^b(\mathcal{H})/\opname{thick}(T_r)$ is the localization functor.  It remains to show that $\overline{T}\in \add M$.

Denote by $\mathcal{S}_{T_r}^{\leq 0}=\cup_{l\geq 0}\add T_r*\add T_r[1]*\cdots *\add T_r[l]$ and $\mathcal{S}_{T_r}^{<0}=\mathcal{S}_{T_r}^{\leq 0}[1]$. Since $T=\overline{T}\oplus T_r$ is a presilting object, we conclude that $\Hom_{\mathcal{T}}(\overline{T},X)=0$ for any $X\in \mathcal{S}_{T_r}^{<0}$. By the construction of $M$ (cf. Section \ref{s:silting-theory}), we conclude that $\overline{T}$ is a direct summand of $M$. In particular, $T$ is a partial silting object.

\subsection{Proof of Theorem \ref{t:main-thm-3}}
Let $T$ be a basic tilting object of $\mathcal{H}$. Denote by $\Lambda=\End_\mathcal{H}(T)$ the endomorphism algebra of $T$, which has global dimension at most $2$. Let $\mod \Lambda$ be the category of finitely generated right $\Lambda$-modules and $\der^b(\mod \Lambda)$ its bounded derived category. It is well-known that $\der^b(\mod \Lambda)\cong \der^b(\mathcal{H})$.  By \cite[Theorem 6.1]{KY}, there is a one-to-one correspondence between  $\opname{silt}\der^b(\mathcal{H})$ and  $\opname{SMC} \der^b(\mathcal{H})$.
Let $P$ be a basic silting object of $\der^b(\mathcal{H})$ and $A:=\End_{\der^b(\mathcal{H})}(P)$ the endomorphism algebra of $P$. Denote by $\mod A$ the category of finitely generated right $A$-modules.
Agian by \cite[Theorem 6.1]{KY},  $P$ determines a bounded $t$-structure of $\der^b(\mathcal{H})$ whose heart is equivalent to $\mod A$ (cf. also \cite[Section 2.4]{Fu}). Via this equivalence, the simple $A$-modules  form a SMC of $\der^b(\mathcal{H})$ corresponding to $P$.  According to Lemma \ref{l:no-cycle}, we conclude that the $\Ext$-quiver of an arbitrary SMC of $\der^b(\mathcal{H})$ is acyclic.  In particular, this implies the `only if' part.

It remains to prove the `if' part and we prove it by induction on the rank $n:=\opname{rank}\go(\der^b(H))$ of the Grothendieck group $\go(\der^b(\mathcal{H}))$. This is clear for $n=1$. Assume that this is true for $n<n_0$. Let $\mathcal{H}$ be a hereditary abelian category  with tilting objects such that $\opname{rank}\go(\der^b(\mathcal{H}))=n_0$. Let $\mathcal{X}=\{X_1,\ldots, X_r\}$ be a pre-SMC of $\der^b(\mathcal{H})$ whose $\Ext$-quiver is acyclic. Without loss of generality, we may assume that $X_1\in \mathcal{H}$ and
\[\Hom_{\der^b(\mathcal{H})}(X_i, X_j[1])=0 \ \text{whenever $i\geq j$.}
\]

Denote by $\mathcal{R}:=\{X_1\}$, which is a pre-SMC of $\der^b(\mathcal{H})$. Denote by $\mathcal{H}_\mathcal{R}$ be the smallest extension-closed subcategory of $\der^b(\mathcal{H})$ containing $\mathcal{R}$. Since $X_1$ is exceptional, we have $\mathcal{H}_\mathcal{R}=\add X_1$. Consequently, $\mathcal{H}_\mathcal{R}$ satisfies  the conditions of Theorem \ref{t:smc-reduction} and there is a bijection between $\opname{SCM}_\mathcal{R}\der^b(\mathcal{H})$ and $\opname{SCM} \der^b(\mathcal{H})/\opname{thick}(X_1)$.
Denote by $\mathcal{Z}:=\mathcal{R}[\geq 0]^\perp\cap ^\perp\mathcal{R}[\leq 0]$. By definition of pre-SMC, we clearly have $X_2,\ldots, X_r\in \mathcal{Z}$. Recall that $\langle 1\rangle$ is the suspension functor of $\mathcal{Z}$. By definition of $\langle 1\rangle$ (cf. Section \ref{ss:smc}), for each object $X\in \mathcal{Z}$ and $n>0$, we have $X\langle n\rangle\in X[n]*\add X_1[n]*\cdots*\add X_1[1]$. As a consequence, we obtain $\Hom_\mathcal{Z}(X_i\langle n\rangle, X_j)=0$ for $2\leq i,j\leq r$ and $n>0$. In particular, $\{X_2,\ldots, X_r\}$ is a pre-SMC of $\mathcal{Z}$.
Let  $\mathbb{L}:\der^b(\mathcal{H})\to \der^b(\mathcal{H})/\opname{thick}(X_1)$ the localization functor. It follows from Theorem \ref{t:smc-reduction} (1) that $\{\mathbb{L}(X_2),\ldots, \mathbb{L}(X_r)\}$ is a pre-SMC of $\der^b(\mathcal{H})/\opname{thick}(X_1)$. Similar to the proof of $(4)\Rightarrow (1)$ of Theorem \ref{t:main-thm-1},  one can show that the $\Ext$-quiver of $\{\mathbb{L}(X_2),\ldots, \mathbb{L}(X_r)\}$ is acyclic. According to Theorem \ref{t:verdier-quotient}, we have $\der^b(\mathcal{H})/\opname{thick}(X_1)\cong \der^b(\mathcal{H}')$, where $\mathcal{H}'$ is a hereditary abelian category with tilting objects such that $\opname{rank}\go(\der^b(\mathcal{H}'))=n_0-1$. By induction, $\{\mathbb{L}(X_2),\ldots, \mathbb{L}(X_r)\}$ can be completed into a SMC of $\der^b(\mathcal{H})/\opname{thick}(X_1)$. Now the result follows from Theorem \ref{t:smc-reduction} (2).


\def\cprime{$'$} \def\cprime{$'$}
\providecommand{\bysame}{\leavevmode\hbox to3em{\hrulefill}\thinspace}
\providecommand{\MR}{\relax\ifhmode\unskip\space\fi MR }
\providecommand{\MRhref}[2]{%
  \href{http://www.ams.org/mathscinet-getitem?mr=#1}{#2}
}
\providecommand{\href}[2]{#2}
 
\end{document}